\newcommand{\bea}{\begin{eqnarray}}
\newcommand{\eea}{\end{eqnarray}}
\newcommand{\clb}{\mathcal{B}}
\newcommand{\cld}{\mathcal{D}}
\newcommand{\cle}{\mathcal{E}}
\newcommand{\clf}{\mathcal{F}}
\newcommand{\clh}{\mathcal{H}}
\newcommand{\clk}{\mathcal{K}}
\newcommand{\clm}{\mathcal{M}}
\newcommand{\cln}{\mathcal{N}}
\newcommand{\clq}{\mathcal{Q}}
\newcommand{\clr}{\mathcal{R}}
\newcommand{\cls}{\mathcal{S}}
\newcommand{\clx}{\mathcal{X}}
\newcommand{\cly}{\mathcal{Y}}
\def\textmatrix#1&#2\\#3&#4\\{\bigl({#1 \atop #3}\ {#2 \atop #4}\bigr)}
\def\dispmatrix#1&#2\\#3&#4\\{\left({#1 \atop #3}\ {#2 \atop #4}\right)}
\newcommand{\be}{\begin{equation}}
\newcommand{\ee}{\end{equation}}
\newcommand{\ben}{\begin{eqnarray*}}
\newcommand{\een}{\end{eqnarray*}}
\newcommand{\NI}{\noindent}
\newcommand{\bi}{\begin{itemize}}
\newcommand{\ei}{\end{itemize}}
\theoremstyle{definition}
\theoremstyle{plain}
\newtheorem{thm}{Theorem}[section]
\newtheorem{lem}[thm]{Lemma}
\theoremstyle{definition}
\newtheorem{defn}[thm]{Definition}
\newtheorem{rem}[thm]{Remark}
\newtheorem{ex}[thm]{Example}
\newcommand{\C}{\mathbb{C}}
\newcommand{\D}{\mathbb{D}}
\newcommand{\T}{\mathbb{T}}
\newcommand{\E}{\mathcal{E}} 
\newcommand{\F}{\mathcal{F}} 
\newcommand{\h}{{H}^2(\mathbb{D})}
\newcommand{\he}{{H}^2_\mathcal{E}(\mathbb{D})}
\newcommand{\hbe}{H^\infty_{\mathcal {B(E)}}(\mathbb{D})}
\numberwithin{equation}{section}
\let\phi=\varphi
\begin{document}

\title[Power partial isometries]{Power partial isometries}
	
\author[Babbar]{Kritika Babbar}
\address{Indian Institute of Technology Roorkee, Department of Mathematics,
		Roorkee-247 667, Uttarakhand,  India}
\email{kritika@ma.iitr.ac.in, kritikababbariitr@gmail.com}

\author[Maji]{Amit Maji}
\address{Indian Institute of Technology Roorkee, Department of Mathematics,
		Roorkee-247 667, Uttarakhand,  India}
\email{amit.maji@ma.iitr.ac.in, amit.iitm07@gmail.com ({Corresponding author)}}

\subjclass[2010]{47A15, 47A45, 47A65, 47A20, 46E22, 46E40, 47B35, 47B32}


\keywords{Partial isometries, reducing subspaces, invariant subspaces, hyperinvariant subspaces, Hardy space, truncated shift, Toeplitz operator, Hankel operator}

\begin{abstract}
In this paper we obtain a complete characterization of reducing, invariant, and hyperinvariant subspaces for the completely non-unitary component of a power partial isometry. In particular, precise characterization of reducing, invariant, and hyperinvariant subspaces of a truncated shift operator has been achieved.
\end{abstract}
\maketitle

\tableofcontents

\section{Introduction}
Partial isometries form an important class of operators in the linear analysis. They have widespread applications in various fields like in the polar decomposition of operators, C$^*$- algebras, quantum physics,  and they form a fundamental object in the dimension theory of von~Neumann algebras; see the papers \cite{Brenken-Niu}, \cite{Pearcy} for a thorough presentation of these connections. 
Partial isometries, on the other hand, are more complicated to study than isometries, so we restrict ourselves to the power partial isometries: operators whose all positive powers are partial isometries. The primary reason to do so is that power partial isometries have more concrete structure due to their algebraic properties. The main objective of this paper is to fully characterize the reducing, invariant, and hyperinvariant subspaces for the completely non-unitary (c.n.u. in short) component of a power partial isometry.

The decomposition of power partial isometries was first studied by Halmos and Wallen in \cite{halmospowers}, where they proved that every power partial isometry is a direct sum of a unitary operator, a unilateral shift, a backward shift, and truncated shifts. This work has inspired a great deal of mathematicians and it becomes one of the active areas of current research that brings together several areas of mathematics. Noteworthy contributions to this field have been made by Guyker \cite{guyker}, Burdak \cite{burdak}, Catepill\'{a}n, and Szyma\'{n}ski \cite{catepillan}, Huef, Raeburn, and Tolich \cite{huef}, and others. The canonical decomposition of a power partial isometry, on one hand, gives a unique orthogonal sum of a unitary operator and a c.n.u.\ operator; and, on the other hand, Halmos-Wallen decomposition yields that the combining components of unilateral shift, backward shift, and truncated shifts form the c.n.u.\ component of a power partial isometry. Since the spectral theorem provides a complete structure and representation for normal operators, in particular, unitary operators, it suffices to work with the c.n.u.\ power partial isometry.

The first natural step to understand the structure of linear operators on Hilbert spaces is investigating its reducing and invariant subspaces. Another related problem is to find all the hyperinvariant subspaces of an operator. These problems have fascinated the attention of various mathematicians, and many results are known for a handful of specific operators but many operators still remain a mystery.
Dilation theory is one of the effective tools to study the structure of a contraction. One of the most far-reaching results is Sz.-Nagy's dilation result which states that every pure contraction on a Hilbert space can be dilated to a unilateral shift on a larger Hilbert space (see \cite{sznagy}). Another celebrated result is Sarason's commuting lifting theorem \cite{sarason} which says that an operator that commutes with a pure contraction can be lifted to an operator commuting with the shift operator without altering the norm. These results play crucial roles in this paper to find the complete and precise characterization of reducing, invariant, and hyperinvariant subspaces of a truncated shift operator which is a particular case of power partial isometry and then we utilize it for the case of an arbitrary c.n.u.\ power partial isometry.

The study of reducing subspaces of unilateral shift was initiated by Halmos in his seminal paper \cite{shifts}. It captivates the interests of many mathematicians and it stimulates the study of reducing subspaces of various classes of operators, including analytic Toeplitz operators, truncated Toeplitz operators which can be found in \cite{Nordgren},  \cite{timotinreducingc00}, \cite{li2018reducibility}, \cite{reducibilitytto}.  The study of invariant subspaces has evolved significantly, but still it remains one of the most intriguing problems in operator theory to find a complete characterization of invariant subspaces of an operator. Beurling \cite{beurling} gave a striking result where he characterized all the invariant subspaces of shift operator on the Hardy space over the open unit disc which was later generalized by Lax and Halmos for vector-valued Hardy space (see \cite{lax},\cite{shifts}). Later on, many mathematicians proved some remarkable results about the invariant subspaces (see \cite{helson}, \cite{sarkar-inv1}, \cite{sarkar-book}, \cite{CHL-Beurling–Lax–Halmos}, \cite{CR-dual}). In \cite{timotin}, Timotin characterized the invariant subspaces of direct sum of unilateral shift and backward shift on scalar-valued Hardy space which was further generalized by Gu and Luo for vector-valued Hardy space in \cite{guinvariant}. Note that the direct sum of unilateral shift and backward shift is a particular case of a c.n.u.\ power partial isometry. In Section 3 of this note, we explicitly find the reducing subspaces of a c.n.u.\ power partial isometry. In Section 4, we obtain the complete structure of invariant subspaces of a c.n.u.\ power partial isometry.

The motivation behind hyperinvariant subspaces of an operator is that they give  information about the commutant of an operator. The notion of hyperinvariant subspaces was first introduced by Douglas and Pearcy in \cite{douglastopology}. They characterized the hyperinvariant subspaces of normal operators as well as of unilateral shift on a vector-valued Hardy space. Extending that, Douglas \cite{douglashyperinvariant} characterized hyperinvariant subspaces of a general isometry. Later on, hyperinvariant subspaces have been extensively studied by various mathematicians like  Wu, Uchiyama (see \cite{wuhyperinvariant},  \cite{uchiyamahyperinvariantc.0}, \cite{uchiyamahyperinvariantc0}). In \cite{uchiyamahyperinvariantc.0}, Uchiyama found the hyperinvariant subspaces of pure contractions. In Section 5, we present an explicit structure for all  the hyperinvariant subspaces of c.n.u.\ power partial isometry and in particular, a pure power partial isometry.
\vspace{.3 cm}

\section{Preliminaries}
In what follows, $\mathcal{H}$ denotes a separable Hilbert space over the complex field $\C$, $I_{\clh}$ as the identity operator on $\mathcal{H}$. We simply write $I$ if the Hilbert space is clear from the context. Let $\mathcal{B(H)}$ stand for the algebra of all bounded linear operators on $\mathcal{H}$. For $T \in \clb(\clh)$, we denote $\cln(T)$ and $\clr(T)$ as the kernel and range of $T$, respectively.  A closed subspace $\mathcal{M}$ is said to be invariant under $T\in \mathcal{B(H)}$ if $T\mathcal{M} \subseteq \clm$ and $\mathcal{M}$ reduces $T$ if $\mathcal{M}$ is invariant under $T$ and $T^*$ both. A closed subspace $\clm$ is hyperinvariant for $T$ if $\clm$ is invariant under all the operators commuting with $T$. We say $T$ is a contraction if $\Vert T h\Vert\leq \Vert h \Vert$ for all $h \in \mathcal{H}$ and it is a pure contraction if $T^{*m} \rightarrow 0$ as $m \rightarrow \infty$ in the strong operator topology. A contraction $T$ on $\mathcal{H}$ is said to be completely non-unitary (c.n.u.\ in short) if there does not exist any nonzero reducing subspace $\mathcal{L}$ of $\clh$ such that $T|_{\mathcal{L}}$ is unitary.  An operator $T \in\mathcal{B(H)}$ is said to be an isometry if $T^*T=I_{\clh}$, that is, $\Vert Th \Vert=\Vert h\Vert $ for all $h \in \mathcal{H}$. A partial isometry $T$ is a contraction such that $\Vert Th\Vert=\Vert h \Vert$ for all $h \in \cln (T)^\perp$.

Let $\cle$ and $\clf$ be two complex separable Hilbert spaces. $L^2_\cle$ denotes the space of $\cle$-valued square-integrable functions on the unit circle $\T$ with respect to the normalized Lebesgue measure. Let $H^2_{\cle}(\D)$ be the Hardy space of $\cle$-valued analytic functions $f$ on the open unit disc $\D = \{z \in \C : |z| <1 \}$ such that 
\[
\| f \| =\sup_{0 \leq r <1}\left(\frac{1}{2\pi}\int_0^{2\pi} \| f(re^{it}) \|_\cle^2\, dt\right) <\infty,
\]
and we often use $H^2_{\cle}(\D)$ as a closed subspace of $L^2_\cle$ (in the sense of radial limits). Let $L^\infty_{\clb(\cle,\clf)}$ denote the algebra of  operator-valued bounded functions on $\T$ and $H^\infty_{\clb(\cle,\clf)}$ ($H^\infty$ for the scalar case) stand for the algebra of operator-valued bounded analytic functions on $\D$. For $\Phi \in L^\infty_{\clb(\cle,\clf)}$, the Laurent operator (or multiplication operator) from $L^2_\cle$ to $L^2_\clf$, denoted by $L_\Phi$, is defined as
\[
L_\Phi h=\Phi h \quad (h \in H^2_\cle(\D)).
\]
 Let $P_+^{\cle}$ be the orthogonal projection of $L^2_\cle$ onto $H^2_\cle(\D)$. The Toeplitz operator $T_\Phi: H^2_\cle(\D)\rightarrow H^2_\clf(\D)$ is defined by 
\[
T_\Phi h=P_+^{\clf}(\Phi h) \quad (h \in H^2_\cle(\D)).
\]
In particular, if $\cle=\clf$ and $\Phi(z) =zI$, then we write $T_\Phi$ as $M_z^\cle$ which is the multiplication operator by the coordinate function $z$ on $H^2_\cle(\D)$. We will simply write $M_z$ if the underlying  Hilbert space $\cle$ is apparent from the context. The Toeplitz  operator $T_\Phi$ is characterized by the operator equation $(M_z^\clf)^*T_\Phi M_z^\cle=T_\Phi$. If $\Phi \in H^\infty_{\clb(\cle,\clf)}$, then $T_\Phi$ is called an analytic Toeplitz operator and is characterized by the equation $M_z^\clf T_\Phi=T_\Phi M_z^\cle$. Let $J$ on $L^2_\cle$ be defined by 
\[
(Jf)(e^{it})= f({e^{-it}}) \quad (f \in L^2_\cle, e^{it} \in \T).
\]
Then $J$ is a self-adjoint unitary map. For $\Phi \in L^\infty _{\clb(\cle,\clf)}$, the Hankel operator $H_\Phi: H^2_\cle(\D) \rightarrow H^2_\clf(\D)$ is defined by 
\[
H_\Phi h=P_+^{\clf}J(\Phi h) \quad (h \in H^2_\cle(\D)).
\]
The Hankel operator $H_\Phi$ is characterized by the equation $H_\Phi M_z^\cle=(M_z^\clf)^*H_\Phi$. The adjoint of $H_\Phi$ is given by $H_\Phi^*=H_{\tilde{\Phi}}$ where $\tilde{\Phi}(z)=\Phi(\bar{z})^* $. Also, recall $\Theta \in H^\infty_{\clb(\cle,\clf)}$ is inner if $\Theta(e^{it})^*\Theta(e^{it})=I_{\cle}$ a.e.\ on $\T$. Moreover, $\Theta$ is right extremal if $\Theta$ cannot be factored as $\Theta_1\Theta_2$, where $\Theta_1$ is inner and $\Theta_2$ is a non-constant inner function. The celebrated Beurling-Lax-Halmos theorem says that any invariant subspace of $M_z^\cle$ is of the form $\Theta H^2_{\cle_*}(\D)$, where $\Theta \in H^\infty_{\clb(\cle_*,\cle)}$ is an inner function and $\cle_*$ is a closed subspace of $\cle$ (see \cite{sznagy}, \cite{Peller-Book} for more details).

Recall the basic definitions and properties of the main objects of this paper, namely, truncated shift and power partial isometry. Let $k \geq 1$ be any natural number. Let $\clh_0$ be a Hilbert space and 
$\mathcal{H}=\underbrace{\clh_0 \oplus\clh_0 \oplus \cdots \oplus\clh_0}_{k\,\, times}$ which is identified with $\clh_0 \otimes \C^k$. A truncated shift of index $k$, denoted by $J_k$, is defined on $\clh$ as $J_k=0$ for $k=1$ and for $k \geq 2$,
\[
J_k(x_1,x_2, \ldots, x_k)=(0, x_1, \ldots, x_{k-1}) \quad (x_i \in \clh_0, i =1, \ldots, k).
\]
Thus the matrix representation of $J_k$ is of the form
\[
    J_k=\begin{pmatrix}
	0 & 0& 0 &\ldots &0 &0\\
	I_{\clh_0} & 0& 0 &\ldots &0 &0\\
	0 & I_{\clh_0}& 0 &\ldots &0 &0\\
	\vdots& \vdots &\vdots &\ldots &\vdots &\vdots\\
        0 & 0 & 0 &\ldots & 0 & 0\\
	0 & 0 &0 &\ldots &I_{\clh_0} &0
	\end{pmatrix}_{k\times k}.
\]

An operator $T \in \mathcal{B(H)}$ is said to be a power partial isometry if $T^n$ is a partial isometry for all $n \geq 1$. Isometries, co-isometries, and truncated shifts are some of the examples of power partial isometries. We record the following characterizations of a partial isometry and properties of power partial isometry which will be used frequently in the sequel.
\begin{lem}[cf. \cite{burdak}] 
Let $T \in \mathcal{B(H)}$. Then the following are equivalent:
\begin{enumerate}
\item $T$ is a partial isometry. 
\item $T^*$ is a partial isometry.
\item $T^*TT^*=T^*$.
\item $TT^*T=T$.
\item $TT^*$ is an orthogonal projection on $\clr(T)$.
\item $T^*T$ is an orthogonal projection on $\clr(T^*)$.
\end{enumerate}
\end{lem}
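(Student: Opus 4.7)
The approach is to set up a cycle of implications and exploit the symmetry between $T$ and $T^*$ in the list. Concretely, I would prove the chain $(1) \Rightarrow (6) \Rightarrow (3) \Rightarrow (4) \Rightarrow (5) \Rightarrow (2)$, and then close the loop by observing that $(2) \Rightarrow (1)$ comes for free: conditions $(1)$ and $(2)$ are swapped under $T \leftrightarrow T^*$, while $(3)$ and $(4)$, as well as $(5)$ and $(6)$, are adjoint pairs, so running the already-established chain for $T^*$ in place of $T$ yields the reverse direction.

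For $(1) \Rightarrow (6)$ I would use that $I - T^*T \geq 0$ together with the vanishing of $\langle (I - T^*T)x, x\rangle = \|x\|^2 - \|Tx\|^2$ on $\cln(T)^\perp$ to conclude $T^*T|_{\cln(T)^\perp} = I$; combined with $T^*Tx = 0$ on $\cln(T)$ and self-adjointness, this identifies $T^*T$ as the orthogonal projection onto $\cln(T)^\perp = \overline{\clr(T^*)}$. A projection has closed range, so $\clr(T^*)$ equals its closure and $T^*T = P_{\clr(T^*)}$ on the nose.

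For $(6) \Rightarrow (3)$, applying the projection $T^*T$ to an arbitrary vector $T^*x \in \clr(T^*)$ fixes it, yielding the operator identity $T^*TT^* = T^*$. The step $(3) \Rightarrow (4)$ is immediate on taking adjoints. For $(4) \Rightarrow (5)$, right-multiplying $TT^*T = T$ by $T^*$ gives $(TT^*)^2 = TT^*$; together with self-adjointness of $TT^*$ this makes it an orthogonal projection, and the inclusion $Tx = TT^*(Tx)$ (from $TT^*T = T$) together with the trivial inclusion $\clr(TT^*) \subseteq \clr(T)$ shows its range equals $\clr(T)$. Finally, $(5) \Rightarrow (2)$ uses that, with $\clr(T)$ now closed, every $y \in \cln(T^*)^\perp = \clr(T)$ satisfies $\|T^*y\|^2 = \langle TT^*y, y\rangle = \|y\|^2$, verifying the defining condition for $T^*$ to be a partial isometry.

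There is no single deep obstacle, but the point that demands the most care is the bookkeeping of range closures: statements $(5)$ and $(6)$ assert that the projections land on $\clr(T)$ and $\clr(T^*)$, not on their closures, so closedness must be justified each time a range appears rather than tacitly assumed. It is always secured by the projection property of $T^*T$ or $TT^*$, but I would flag it explicitly at the relevant steps. The remainder of the argument is then a formal manipulation of the identities $T^*TT^* = T^*$ and $TT^*T = T$.
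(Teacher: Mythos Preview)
Your argument is correct and complete. Note, however, that the paper does not supply its own proof of this lemma: it is recorded as a preliminary fact with a citation to \cite{burdak}, so there is no in-paper argument to compare against. Your cycle $(1)\Rightarrow(6)\Rightarrow(3)\Rightarrow(4)\Rightarrow(5)\Rightarrow(2)\Rightarrow(1)$ with the $T\leftrightarrow T^*$ symmetry is the standard route and each step is sound. One small point you leave implicit in $(5)\Rightarrow(2)$: the paper's definition of partial isometry includes that the operator be a contraction, so you should record that $\|T^*x\|^2=\langle TT^*x,x\rangle\le\|x\|^2$ for every $x$ (since $TT^*$ is a projection), not just for $x\in\cln(T^*)^\perp$. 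With that one-line addition the write-up is watertight.
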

For every power partial isometry $T$, we write $E_k=T^{*k}T^k$ and $F_k=T^kT^{*k}$ for all $k \geq 0$ as the initial and final projections, respectively. Clearly, $E_k \geq E_{k+1}$ and $F_k  \geq F_{k+1}$ for all $k \geq 0$. 
\begin{lem} [cf. \cite{halmospowers}] Let $T \in \mathcal{B(H)}$ be a power partial isometry. Then 
\begin{enumerate}
\item $E_kE_l=E_lE_k$ and  $F_kF_l=F_lF_k$  for all $k,l \geq 0$.
\item $E_kF_l=F_lE_k$ for all $k,l \geq 0$.
\item $E_kF_l$ and $E_kE_l$ are orthogonal  projections.
\item $TE_{k+1}=E_{k}T$ for all $k \geq 0$.
\item $TF_{k}=F_{k+1}T$ for all $k \geq 0$.
\end{enumerate}
\end{lem}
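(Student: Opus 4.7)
The plan is to derive all five parts from one key fact: for partial isometries $A$, $B$, the product $AB$ is a partial isometry if and only if $A^{*}A$ commutes with $BB^{*}$. Combining this with the identities of the preceding lemma, I would tackle the assertions in the order (1), (2), (3), (4), (5).

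For (1), I would just note that since $\cln(T^n)\subseteq \cln(T^{n+1})$, the projections $E_n$ onto $\cln(T^n)^{\perp}$ form a decreasing chain, and projections with nested ranges commute; similarly $\{F_n\}$ is decreasing since $\clr(T^{n+1})\subseteq \clr(T^n)$. For (2), I would apply the characterization above to the factorization $T^{k+l}=T^{k}\cdot T^{l}$, whose three factors are all partial isometries by hypothesis; this immediately yields $E_{k}F_{l}=F_{l}E_{k}$. Assertion (3) is then a one-line corollary, since the product of two commuting orthogonal projections is an orthogonal projection.

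Relations (4) and (5) follow from short calculations combining (2) with the partial-isometry identities $F_{1}T=TT^{*}T=T$ and $TE_{1}=TT^{*}T=T$. For (4), I would rewrite
\[
TE_{k+1}=TT^{*(k+1)}T^{k+1}=(TT^{*})(T^{*k}T^{k+1})=F_{1}E_{k}T,
\]
then swap $F_{1}E_{k}=E_{k}F_{1}$ via (2) and collapse $F_{1}T=T$ to reach $E_{k}T$. The proof of (5) is symmetric: $F_{k+1}T=(TT^{k})(T^{*k}T^{*})T=T(T^{k}T^{*k})(T^{*}T)=TF_{k}E_{1}$, after which (2) together with $TE_{1}=T$ gives $TF_{k}$.

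The principal obstacle is the ``only if'' direction of the auxiliary lemma on products of partial isometries. To prove it I would set $P=A^{*}A$, $Q=BB^{*}$ and use that $B^{*}PB=(AB)^{*}(AB)$ is a projection. Idempotency gives $B^{*}P(I-Q)PB=0$; recognizing this as $((I-Q)PB)^{*}((I-Q)PB)$ forces $(I-Q)PB=0$, i.e.\ $PB=QPB$. Taking adjoints yields $B^{*}P=B^{*}PQ$, so $P$ sends $\cln(Q)=\clr(B)^{\perp}$ into itself. Self-adjointness of $P$ then forces it to commute with the projection $I-Q$, hence with $Q$.
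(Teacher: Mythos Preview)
Your argument is correct. The paper itself does not supply a proof of this lemma: it is stated with the attribution ``cf.\ \cite{halmospowers}'' and used as a standing tool, so there is no in-text proof to compare against. Your route is in fact the one Halmos and Wallen use in the cited source: the key observation that a product $AB$ of partial isometries is again a partial isometry precisely when the initial projection $A^{*}A$ commutes with the final projection $BB^{*}$, applied to the factorization $T^{k+l}=T^{k}T^{l}$, immediately gives (2); the monotonicity of the chains $\{E_n\}$ and $\{F_n\}$ handles (1); and (4)--(5) drop out of the short algebraic manipulations you wrote, combining (2) with the identities $TT^{*}T=T$ and $T^{*}TT^{*}=T^{*}$. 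Your verification of the ``only if'' direction of the auxiliary lemma is clean: the step $B^{*}P(I-Q)PB=0\Rightarrow (I-Q)PB=0$ is exactly the positivity argument one needs, and deducing $PQ=QP$ from the invariance of $\cln(Q)$ under the self-adjoint $P$ is standard.
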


We have the following decomposition theorem for a power partial isometry given by Halmos and Wallen in \cite{halmospowers}.
\begin{thm}
Let $T \in \clb(\clh)$ be a power partial isometry. Then there exist subspaces 
$\clh_u, \clh_s, \clh_b$, and $\clh_k\,\, (k \geq 1)$ reducing $T$   and 
\[
	\clh= \clh_u \oplus\clh_b \oplus \clh_s \oplus \left(\bigoplus\limits _{k=1}	^\infty \clh_k\right),
\]
such that $T|_{\clh_u}$ is a unitary,  $T|_{\clh_b}$ is a backward shift, $T|_{\clh_s}$ is a unilateral shift and $T|_{\clh_k}$ is a truncated shift of index $k$. 
\end{thm}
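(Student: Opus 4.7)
The plan is to exploit the commuting monotone families of projections $E_n=T^{*n}T^n$ and $F_n=T^nT^{*n}$ furnished by the preceding lemma. Since the $E_n$'s and $F_n$'s are pairwise commuting decreasing sequences of orthogonal projections, their strong operator limits $E_\infty=\lim_n E_n$ and $F_\infty=\lim_n F_n$ exist and are commuting projections; moreover, any $x$ with $E_\infty x=x$ must satisfy $E_n x=x$ for every $n$, because $\|E_n x\|^2$ decreases to $\|x\|^2$ while staying bounded above by $\|x\|^2$, and the analogous statement holds for $F_\infty$. Using $TE_{k+1}=E_kT$, $TF_k=F_{k+1}T$, and their adjoints, I would verify that $E_\infty$ and $F_\infty$ both commute with $T$, so the four pairwise orthogonal projections $E_\infty F_\infty$, $E_\infty(I-F_\infty)$, $(I-E_\infty)F_\infty$, $(I-E_\infty)(I-F_\infty)$ decompose $\clh$ into reducing subspaces.

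On $E_\infty F_\infty\clh$ we get $T^*T=TT^*=I$, so $T$ is unitary; take this as $\clh_u$. On $E_\infty(I-F_\infty)\clh$ the operator $T$ is an isometry with $T^nT^{*n}\to 0$ strongly, which is exactly the Wold characterization of a unilateral shift, yielding $\clh_s$; dually, $(I-E_\infty)F_\infty\clh$ produces the backward shift part $\clh_b$. What remains is the subspace $\clh':=(I-E_\infty)(I-F_\infty)\clh$, on which both $E_n\downarrow 0$ and $F_n\downarrow 0$. To dissect it I would introduce the two-index family of commuting orthogonal projections $G_{j,i}:=(E_{j-1}-E_j)(F_{i-1}-F_i)$ for $j,i\ge 1$, with the convention $E_0=F_0=I$. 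Telescoping gives $\sum_{j,i\ge 1}G_{j,i}=(I-E_\infty)(I-F_\infty)$, and hence an orthogonal decomposition $\clh'=\bigoplus_{j,i\ge 1}G_{j,i}\clh$.

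The key structural step, which I expect to be the main obstacle, is to show that $T$ acts on this grid as a staircase along antidiagonals. Feeding the defining conditions $E_{j-1}x=x$, $E_jx=0$, $F_{i-1}x=x$, $F_ix=0$ into the intertwiners, $T$ maps $G_{j,i}\clh$ into $G_{j-1,i+1}\clh$ while $T^*$ maps $G_{j,i}\clh$ into $G_{j+1,i-1}\clh$. For $j\ge 2$ I would deduce isometricity of $T$ on $G_{j,i}\clh$ from the inequality $E_{j-1}\le E_1$: the hypothesis $E_{j-1}x=x$ forces $E_1x=x$, whence $\|Tx\|^2=\langle E_1x,x\rangle=\|x\|^2$; when $j=1$ the same computation yields $Tx=0$, while the dual statement with $TT^*=F_1$ gives surjectivity of $T:G_{j,i}\clh\to G_{j-1,i+1}\clh$ for $j\ge 2$, since any $y$ with $F_iy=y$ satisfies $F_1y=y$, hence $TT^*y=y$ and $x:=T^*y\in G_{j,i}\clh$ maps to $y$ under $T$. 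Consequently, for each $k\ge 1$ the antidiagonal $\clh_k:=\bigoplus_{j+i=k+1}G_{j,i}\clh$ is stable under both $T$ and $T^*$, hence reduces $T$, and the isometric bijections $T:G_{k-i+1,i}\clh\to G_{k-i,i+1}\clh$ together with $TG_{1,k}\clh=0$ identify $T|_{\clh_k}$ with the truncated shift of index $k$ whose base space is the initial cell $G_{k,1}\clh$. The delicate part throughout is the careful bookkeeping of indices in the staircase argument, particularly checking both isometry and surjectivity of $T$ between consecutive cells on each antidiagonal, since it is this pair of properties that simultaneously pins down both the truncation length $k$ and the base Hilbert space $\clh_0$.
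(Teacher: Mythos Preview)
The paper does not give its own proof of this theorem; it is stated as a preliminary result with a citation to Halmos and Wallen \cite{halmospowers}. Your sketch is correct and is essentially the original Halmos--Wallen argument. In fact, immediately after stating the theorem the paper records (without derivation) the explicit formula
\[
\clh_k=\bigoplus_{n=1}^k(F_{n-1}-F_n)(E_{k-n}-E_{k-n+1})\clh,
\]
which is precisely your antidiagonal decomposition $\bigoplus_{j+i=k+1}G_{j,i}\clh$ with the indices relabelled, so your construction dovetails with what the paper subsequently uses.
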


In \cite{burdak}, Burdak explicitly found all the orthogonal decomposition subspaces for a power partial isometry $T$ as follows:
\begin{align*}
 \clh_u&=\left( \bigcap\limits_{n=0}^\infty T^n \clh \right) \cap \left( \bigcap\limits_{n=0}^\infty T^{*n}\clh\right), \\
  \clh_s&=\left( \bigcap\limits_{n=0}^\infty T^{*n}\clh\right) \cap \left(\bigoplus\limits_{n=0}^\infty T^n(\cln(T^*))\right),\\
  \clh_b&=\left( \bigcap\limits_{n=0}^\infty T^{n}\clh \right)\cap \left( \bigoplus\limits_{n=0}^\infty T^{*n}(\cln(T))\right),\\
  \clh_t= \left(\bigoplus\limits _{k=1}	^\infty \clh_k\right) &=\left( \bigoplus\limits_{n=0}^\infty T^{n}(\cln(T^*))\right) \cap \left( \bigoplus\limits_{n=0}^\infty T^{*n}(\cln(T))\right).
\end{align*}
More precisely, we determine a representation of $\clh_k$ from the Halmos and Wallen decomposition theorem. In fact,
\[
\clh_k= \bigoplus\limits_{n=1}^k\left(F_{n-1}-F_n\right)\left(E_{k-n}-E_{k-n+1}\right)\clh.
\]
Then it is easy to see that
\[
\clh_k=\bigoplus\limits_{n=1}^k \clr\left(F_{n-1}-F_n\right) \cap \clr \left(E_{k-n}-E_{k-n+1}\right).
\]
Note that for any $p \geq 1$,
\begin{align*}
F_{p-1}-F_{p}&=F_{p-1}-F_{p}F_{p-1}\\
&=(I_{\clh}-F_p)F_{p-1}\\
&=P_{\cln(T^{*p})}P_{\clr(T^{p-1})}.
\end{align*}
Since $P_{\cln(T^{*p})}$ and $P_{\clr(T^{p-1})}$ commute, it follows that
$\clr(F_{p-1}-F_{p})=\cln(T^{*p})\cap \clr(T^{p-1})$. Similarly, $\clr(E_{p-1}-E_{p})=\cln(T^{p})\cap \clr(T^{*(p-1)})$.  We now prove that
\[
\cln(T^{*p}) \cap \clr(T^{p-1})=T^{p-1}(\cln(T^*)).
\]
Let $x \in \cln(T^{*p}) \cap \clr(T^{p-1})$. Then $T^{p-1}T^{*(p-1)}x=x$ and $T^*(T^{*(p-1)}x)= T^{*p}x=0$. This implies $x \in T^{p-1}(\cln(T^*))$. Hence 
\[
\cln(T^{*p}) \cap \clr(T^{p-1}) \subseteq T^{p-1}(\cln(T^*)).
\] 

Suppose that $y \in \cln(T^*)$. Then 
\[
T^{*p}T^{p-1}y=T^*T^{*(p-1)}T^{p-1}y=T^{*p}T^pT^*y=0,
\]
as $T^*E_{p-1} = E_pT^*$.
Thus $T^{p-1}y \in \cln(T^{*p}) \cap \clr(T^{p-1})$. Hence $\cln(T^{*p}) \cap \clr(T^{p-1})=T^{p-1}(\cln(T^*))$. Similarly, $\cln(T^{p}) \cap \clr(T^{*(p-1)})=T^{*(p-1)}(\cln(T))$. Therefore, we get
\[
\clh_k=\bigoplus\limits_{n=1}^k T^{n-1}(\cln(T^*)) \cap T^{*(k-n)}(\cln(T)).
\]

Returning back to the well-known \textit{Sz.-Nagy and Foia\c{s}'} dilation and the 
Sarason's commuting lifting theorem which are essential tools in comprehending the structure of contractions and hence for any operator.

Let $T$ be a contraction on $\clh$. We say $M_z$ on $\he$ is an isometric dilation of $T$ on $\clh$ if there exists an isometry $\Pi: \clh \rightarrow \he$ (cf. \cite{sarkar-book}) such that
\[
\Pi T^*=M_z^*\Pi.
\]
Suppose $T$ is a pure contraction on $\clh$, $D_{T^*}=(I_{\clh} - TT^*)^{1 \over 2} $, and $\cld_{T^*} = \overline{\clr(D_{T^*})}$. Define an isometry $\Pi: \clh \rightarrow H^2_{\cld_{T^*}}(\D)$ by
\[
(\Pi h)(z)=D_{T^*}(I-zT^*)^{-1}h \quad (z \in \D, h \in \clh).
\]
Then it is easy to see that $\Pi T^*=M_z^*\Pi.$ Hence $M_z$ is an isometric dilation of $T$ and $\Pi \clh$ is $M_z^*$ invariant subspace of $H^2_{\cld_{T^*}}(\D)$. Thus $ T$ is unitarily equivalent to $P_{\Pi \clh} M_z|_{\Pi \clh}$, that is, $T \cong P_{\Pi \clh} M_z|_{\Pi \clh}$, and we often use $T=P_{\Pi \clh} M_z|_{\Pi \clh}$  if it is clear from the context. 
Also note that 
\[
H^2_{\cld_{T^*}}(\D)=\bigvee\{z^m\Pi h: m \in \mathbb{Z}_+, h \in \clh\},
\]
which means that $M_z$ is the minimal isometric dilation of $T$.
In the sequel, due to Halmos and Wallen decomposition, we work with the c.n.u.\ power partial isometry 
\[
T=M_z^\cle \oplus (M_z^\clf)^* \oplus \bigoplus\limits_{k=1}^\infty J_k
\]
on $H^2_\cle(\D) \oplus H^2_{\clf}(\D) \oplus \bigoplus \limits_{k=1}^\infty  \mathbb{E}_k $, where $J_k$ is the truncated shift of index $k$ on 
\[
\mathbb{E}_k =\underbrace{\cle_k \oplus \cle_k\oplus \ldots \oplus \cle_k}_{k\,\,times} =\cle_k \otimes \C^k,
\]
and $\cle,\clf,\text{ and } \cle_k$ are complex separable Hilbert spaces for all $k \geq 1$ (cf. \cite[Theorem 2.1]{huef}).

\begin{thm}[cf. \cite{FF-commutant}]
Let $T_1$ and $T_2$ be contractions on $\cle_1$ and $\cle_2$ and let $V_1$ and $V_2$ be their minimal isometric dilations on $\clf_1$ and $\clf_2$, respectively. For every bounded operator $X \in \clb(\cle_2,\cle_1)$ satisfying $T_1X=XT_2$, there exists $Y \in \clb(\clf_2,\clf_1)$ such that
\begin{enumerate}
\item $V_1 Y=Y V_2$,
\item $X=P_{\cle_1}Y |_{\cle_2}$,
\item $Y(\clf_2 \ominus \cle_2) \subseteq \clf_1 \ominus \cle_1$, and
\item $\Vert X \Vert=\Vert Y\Vert.$
\end{enumerate}
\end{thm}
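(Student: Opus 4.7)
By homogeneity we may assume $\|X\| \leq 1$. The strategy is to define $Y$ on a dense subspace of $\clf_2$ by the formula forced by (1) and (2), establish the operator-norm bound that makes this well-defined, extend $Y$ by continuity, and then read off (3) and (4) from the construction.

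\textbf{Construction on a dense subspace.} Since $V_2$ is the \emph{minimal} isometric dilation of $T_2$, the linear span
\[
\clm := \mathrm{span}\{V_2^n h : n \geq 0,\ h \in \cle_2\}
\]
is dense in $\clf_2$. Define $Y_0 : \clm \to \clf_1$ by
\[
Y_0\Bigl(\sum_i V_2^{n_i} h_i\Bigr) := \sum_i V_1^{n_i} X h_i.
\]
The formula is forced by (1) and (2): if $Y$ exists, then $Y V_2^n h = V_1^n Y h = V_1^n X h$ since $Y h = X h \in \cle_1 \subseteq \clf_1$.

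\textbf{Main obstacle --- the norm bound.} Well-definedness and the inequality $\|Y_0\| \leq \|X\|$ both reduce to proving
\[
\Bigl\|\sum_i V_1^{n_i} X h_i\Bigr\| \leq \|X\| \cdot \Bigl\|\sum_i V_2^{n_i} h_i\Bigr\|.
\]
Using the isometric-dilation identity $\langle V^m h, V^n g\rangle = \langle T^{m-n} h, g\rangle$ for $h, g \in \cle$ and $m \geq n$ (which follows from $V^*|_\cle = T^*$ and $V$ being isometric), this reduces to a positive semidefiniteness statement about Gram-like matrices built from powers of $T_1, T_2$ and the intertwiner $X$. This is the heart of the theorem. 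The standard route (Sz.-Nagy--Foia\c{s}) is inductive: one extends $Y$ one level at a time from $\bigvee_{k \leq n} V_2^k \cle_2$ to $\bigvee_{k \leq n+1} V_2^k \cle_2$ via a Parrott-type norm-preserving extension, using the freedom afforded by $\clf_1 \ominus \cle_1$ to absorb the defect; the intertwining $T_1 X = X T_2$ supplies exactly the compatibility required to carry the induction.

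\textbf{Verification of the four conclusions.} Extend $Y_0$ by continuity to $Y : \clf_2 \to \clf_1$ with $\|Y\| \leq \|X\|$. Conclusion (1), $V_1 Y = Y V_2$, holds on the dense subspace $\clm$ by construction and hence on $\clf_2$. For (2), $h \in \cle_2$ gives $Y h = X h \in \cle_1$, so $P_{\cle_1} Y|_{\cle_2} = X$. For (3), using $T_1^n X = X T_2^n$ and the coinvariance relation $P_{\cle_1} V_1^n|_{\cle_1} = T_1^n$, we compute for $h \in \cle_2$ and $n \geq 1$,
\[
Y(V_2^n h - T_2^n h) = V_1^n X h - X T_2^n h = V_1^n X h - T_1^n X h = (I - P_{\cle_1}) V_1^n X h \in \clf_1 \ominus \cle_1,
\]
and since the vectors $V_2^n h - T_2^n h = (I - P_{\cle_2}) V_2^n h$ span a dense subspace of $\clf_2 \ominus \cle_2$, (3) follows by continuity. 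Finally, (4) follows from $\|Y\| \leq \|X\|$ and $\|X\| = \|P_{\cle_1} Y|_{\cle_2}\| \leq \|Y\|$.
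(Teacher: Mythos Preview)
First, a note: the paper does not prove this theorem; it is stated in the preliminaries with a reference to Foia\c{s}--Frazho as a known tool, so there is no in-paper argument to compare against.

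Your argument has a genuine gap. The formula $Y_0\bigl(\sum_i V_2^{n_i} h_i\bigr) = \sum_i V_1^{n_i} X h_i$ is claimed to be forced because ``$Y h = X h$'' for $h \in \cle_2$, but condition~(2) says only $P_{\cle_1} Y|_{\cle_2} = X$; the component of $Yh$ in $\clf_1 \ominus \cle_1$ is undetermined and in general nonzero. Consequently $Y_0$ need not even be well-defined. Concretely, take $\cle_1 = \cle_2 = \C^2$, let $T_1 = T_2 = T$ be the nilpotent with $Te_1 = 0$, $Te_2 = e_1$, and set $X = T$ (so $T_1 X = T^2 = X T_2$). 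The minimal isometric dilation $V$ acts on $\C^2 \oplus \cld_T \oplus \cld_T \oplus \cdots$ and satisfies $V e_2 = e_1$; thus $e_1 - V e_2 = 0$, yet your formula assigns to it $T e_1 - V T e_2 = - V e_1 \neq 0$. A valid lifting here is $Y = V$, for which $Y e_1 = V e_1 \in \clf_1 \ominus \cle_1$, not $X e_1 = 0$.

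You half-recognize the difficulty in the ``Main obstacle'' paragraph, where you describe the correct inductive Parrott-type construction that exploits the freedom in $\clf_1 \ominus \cle_1$ at each stage. But that construction is incompatible with your explicit formula, and you never carry it out; you merely name it and then return to the formula to verify (2)--(4). Those verifications therefore rest on the false premise $Yh = Xh$ for $h \in \cle_2$. The substantive content of the theorem---the norm-preserving step-by-step extension---remains unproved.
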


We conclude this section by reviewing the notion of range functions (see \cite{helson}) which will be helpful in finding the invariant subspaces of power partial isometry.

\begin{defn}
A range function $K$ of $\mathcal{E}$ is a function on the unit circle $\T$ which takes values in the closed subspaces of $\cle$. $K$ is said to be measurable if the orthogonal projection $P(e^{it})$ on $K(e^{it})$ is measurable, that is, $\langle P(e^{it})a_1,a_2 \rangle$ is a measurable scalar function for all $a_1, a_2 \in \cle$.  
\end{defn}

For each measurable function $K$ of $\cle$, we denote $\clm_{K}$ as the set of all functions $f \in L^2_\cle$ such that $f(e^{it}) \in K(e^{it})$ a.e.\ on $\T$.
\vspace{.3 cm}

\section{Reducing subspaces of a c.n.u.\ power partial isometry}

In this section, we discuss reducing subspaces of truncated shift of index $k$. Using this result, we obtain the reducing subspaces of c.n.u.\ power partial isometry.

\begin{thm}\label{thm-reducingtruncatedshift}
Let $\cle$ be a Hilbert space and $k$ be any natural number. Then a closed subspace $\clm$ of $\cle \otimes \mathbb{C}^k$ is reducing for truncated shift of index $k$ on $\cle \otimes \mathbb{C}^k$ if and only if  $\clm=\cls \otimes \mathbb{C}^k$, where $\cls$ is a closed subspace of $\cle$.
\end{thm}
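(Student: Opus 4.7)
The backward direction is essentially a coordinate-wise check: if $\clm = \cls \otimes \C^k$ then both $J_k$ and $J_k^*$ act by shifting entries (filling in $0 \in \cls$), so $\clm$ is clearly invariant under both, hence reducing. I would dispose of this in a sentence.

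For the forward direction, suppose $\clm$ reduces $J_k$. My plan is to guess the right $\cls$, namely
\[
\cls = \{x \in \cle : (x, 0, \ldots, 0) \in \clm\},
\]
which is the preimage of $\clm$ under the continuous embedding $\cle \hookrightarrow \cle \otimes \C^k$ into the first slot, hence automatically a closed subspace of $\cle$. The two inclusions $\cls \otimes \C^k \subseteq \clm$ and $\clm \subseteq \cls \otimes \C^k$ will be handled separately.

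For $\cls \otimes \C^k \subseteq \clm$: given $x \in \cls$, the vector $(x, 0, \ldots, 0)$ lies in $\clm$, and applying $J_k$ repeatedly places $x$ in any slot $j$, so $(0, \ldots, 0, x, 0, \ldots, 0) \in \clm$. Linearity and closedness then give every $(s_1, \ldots, s_k)$ with $s_i \in \cls$. For the reverse inclusion, I would use the computation $I - J_k J_k^* = $ the orthogonal projection onto the first coordinate slot, and iterated powers of $J_k^*$ as a ``cyclic shift toward the front.'' Concretely, for $(x_1, \ldots, x_k) \in \clm$, applying $J_k^{*(j-1)}$ (which preserves $\clm$ since $\clm$ is reducing) yields $(x_j, x_{j+1}, \ldots, x_k, 0, \ldots, 0) \in \clm$, and then applying $I - J_k J_k^*$ (again preserving $\clm$) produces $(x_j, 0, \ldots, 0) \in \clm$. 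By definition of $\cls$, this gives $x_j \in \cls$ for every $j$, so $(x_1, \ldots, x_k) \in \cls \otimes \C^k$.

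There is no real obstacle here; the only thing to watch is that one genuinely uses both invariance under $J_k$ (to push coordinates forward for the containment $\cls \otimes \C^k \subseteq \clm$) and invariance under $J_k^*$ (to pull coordinates backward for the reverse containment), so the reducing hypothesis is essential in both directions. The rank-one projection $I - J_k J_k^*$ is the key computational device that isolates a single coordinate inside $\clm$.
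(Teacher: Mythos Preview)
Your argument is correct. It is, however, a genuinely different route from the paper's.

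The paper proceeds via dilation theory: it embeds $\cle\otimes\C^k$ as the model space $\clq=[z^kH^2_\cle(\D)]^\perp$, so that $J_k\cong P_\clq M_z|_\clq$, and then extends the projection $\Pi P_\clm\Pi^*$ (acting on $\clq$) to an operator $P$ on all of $H^2_\cle(\D)$. A lengthy computation shows that this $P$ is an orthogonal projection commuting with $M_z$; hence $P=T_{\phi_0}$ for a constant projection $\phi_0\in\clb(\cle)$, and one reads off $\clm=\phi_0(\cle)\otimes\C^k$. Your proof, by contrast, stays entirely inside $\cle\otimes\C^k$: you define $\cls$ as the first-slot trace of $\clm$, and use the identities $J_k^{*(j-1)}$ (shift coordinates forward) together with $I-J_kJ_k^*$ (project onto the first slot) to isolate each coordinate. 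This is shorter and more elementary, requiring no function theory at all. What the paper's approach buys is methodological consistency: the same dilation-and-lift machinery is the engine behind the later invariant and hyperinvariant subspace results, so the reducing-subspace case serves as a warm-up for that framework rather than as a standalone fact.
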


\begin{proof}
Suppose $J_k$ is truncated shift operator of index $k$ on $\mathcal{E}\otimes \mathbb{C}^k$.
For $k=1$, $J_k=0$ and thus the result holds trivially. Assume that $k \geq 2$. Define a map $\Pi:\mathcal{E}\otimes \mathbb{C}^k \rightarrow H^2_\mathcal{E}(\D)$ by
\[
\Pi(a_0,a_1,\ldots,a_{k-1})=\sum\limits_{m=0}^{k-1} a_mz^m \qquad (a_i \in \cle).
\]
Clearly, $\Pi$ is an isometry such that $\Pi J_k^*=M_z^*\Pi$. More precisely, $M_z$ is the minimal isometric dilation of $J_k$ such that 
\[
J_k \cong P_{\clq}M_z|_{\clq},
\]
where $\clq=\Pi(\mathcal{E}\otimes \mathbb{C}^k)= [z^k\he]^\perp$.

Let $\clm$ be a reducing subspace of $J_k$ and $P_\clm$ be the orthogonal projection of $\cle \otimes \mathbb{C}^k$ onto $\clm$. Then $P_\clm J_k=J_kP_\clm$ which implies  $P_\clm(\cln(J_k^*)) \subseteq \cln(J_k^*)$. Also note that 
\[
\cln(J_k^*)= \cle \oplus \underbrace{\{0\} \oplus \cdots \oplus \{0\}}_{(k-1)\,\,times}.
\]
Hence, for each $a \in \cle$ (considered as a constant function in $H^2_\cle(\D)$), $\Pi P_\clm \Pi^* a \in \cle$. Now define a map $P: H^2_\mathcal{E}(\D)\rightarrow H^2_\mathcal{E}(\D)$ by 
\[
P\left(\sum\limits_{m=0}^\infty a_mz^m\right):=\Pi P_\clm \Pi^*\left(\sum\limits_{m=0}^{k-1}a_mz^m\right)+\sum\limits_{m=k}^\infty (\Pi P_\clm \Pi^*a_m)z^{m}.
\]
It is easy to check that $P$ is a well defined map on $H^2_\mathcal{E}(\D)$ and $P\Pi=\Pi P_\clm $. We first prove that $P$ is a projection. Let $f=\sum\limits_{m=0}^\infty a_m z^m, g= \sum\limits_{m=0}^\infty b_mz^m \in H^2_\cle(\D)$. Suppose
\[
\Pi P_\clm\Pi^*a_m =x_m \in \cle \mbox{~~for each~~} m.
\]
Consider
\begin{align*}
P^2 f = P^2\left(\sum\limits_{m=0}^\infty a_mz^m\right)&=P\left( \Pi P_\clm \Pi^*\left(\sum\limits_{m=0}^{k-1}a_mz^m\right)+\sum\limits_{m=k}^\infty (\Pi P_\clm \Pi^*a_m)z^{m}\right)\\
&=(\Pi P_\clm \Pi^* )^2\left(\sum\limits_{m=0}^{k-1}a_mz^m \right) +P\left(\sum\limits_{m=k}^\infty x_mz^m \right)\\
&=\Pi P_\clm \Pi^* \left(\sum\limits_{m=0}^{k-1}a_mz^m \right) + \sum\limits_{m=k}^\infty (\Pi P_\clm \Pi^*x_m)z^{m}\\
&=\Pi P_\clm \Pi^* \left(\sum\limits_{m=0}^{k-1}a_mz^m \right) + \sum\limits_{m=k}^\infty ((\Pi P_\clm \Pi^*)^2 a_m)z^m\\
&=\Pi P_\clm \Pi^* \left(\sum\limits_{m=0}^{k-1}a_mz^m \right) + \sum\limits_{m=k}^\infty (\Pi P_\clm \Pi^*a_m)z^m\\
&=P\left(\sum\limits_{m=0}^\infty a_mz^m\right) \\
&= Pf.
\end{align*} 

Also,
\begin{align*}
\left\langle f, P^*g \right\rangle& =\left \langle P\left(\sum\limits_{m=0}^\infty a_mz^m\right), \sum\limits_{m=0}^\infty b_mz^m\right \rangle \\
& =\left\langle \Pi P_\clm \Pi^*\left(\sum\limits_{m=0}^{k-1}a_mz^m\right)+\sum\limits_{m=k}^\infty (\Pi P_\clm \Pi^*a_m)z^m, \sum\limits_{m=0}^\infty b_mz^m \right\rangle\\
&= \left\langle \Pi P_\clm \Pi^*\left(\sum\limits_{m=0}^{k-1} a_mz^m\right), \sum\limits_{m=0}^{k-1}b_mz^m \right\rangle +\left \langle \sum\limits_{m=k}^\infty (\Pi P_\clm \Pi^*a_m)z^m, \sum\limits_{m=k}^\infty b_mz^m \right\rangle\\ 
&=  \left \langle \sum\limits_{m=0}^{k-1}a_mz^m, \Pi P_\clm \Pi^*\left(\sum\limits_{m=0}^{k-1} b_mz^m\right) \right\rangle + \sum\limits_{m=k}^\infty \left\langle \Pi P_\clm \Pi^*a_m , b_m \right\rangle\\
&=  \left \langle \sum\limits_{m=0}^{k-1}a_mz^m, \Pi P_\clm \Pi^*\left(\sum\limits_{m=0}^{k-1} b_mz^m\right) \right\rangle + \sum\limits_{m=k}^\infty \left\langle a_m , \Pi P_\clm \Pi^*b_m \right\rangle\\
&=  \left \langle \sum\limits_{m=0}^{k-1}a_mz^m, \Pi P_\clm \Pi^*\left(\sum\limits_{m=0}^{k-1} b_mz^m\right) \right\rangle + \left\langle \sum\limits_{m=k}^{\infty}a_mz^m , \sum\limits_{m=k}^{\infty} (\Pi P_\clm \Pi^*b_m)z^{m} \right\rangle\\
&=\left\langle f, \Pi P_\clm \Pi^*\left(\sum\limits_{m=0}^{k-1}b_mz^m\right)+\sum\limits_{m=k}^\infty (\Pi P_\clm \Pi^* b_m)z^m\right\rangle.
\end{align*}
Therefore, 
\begin{align*}
P^*g
& = \Pi P_\clm \Pi^*\left(\sum\limits_{m=0}^{k-1}b_mz^m\right)+\sum\limits_{m=k}^\infty (\Pi P_\clm \Pi^*b_m) z^{m}
=P\left(\sum\limits_{m=0}^\infty b_mz^m \right) =Pg.
\end{align*}
Hence $P$ is an orthogonal projection on $H^2_\cle(\D)$. 

We now prove that $P$ commutes with $M_z$. To do that, first recall that 
\begin{align*}
\Pi J_k^* & = M_z^*\Pi \\ \mbox{and~~~}
P_\clm J_k &= J_k P_\clm.
\end{align*}
Consider
\begin{align*}
PM_zf &=P\left(\sum \limits_{m=0}^\infty a_mz^{m+1}\right)\\
 &=\Pi P_\clm \Pi^*\left(\sum\limits_{m=0}^{k-2}a_{m}z^{m+1}\right)+\sum\limits_{m=k-1}^\infty (\Pi P_\clm \Pi^*a_{m})z^{m+1}\\
 &=\Pi P_\clm \Pi^*M_z\left(\sum\limits_{m=0}^{k-2}a_mz^m\right)+ \left(\Pi P_\clm \Pi^*a_{k-1}\right)z^k +\sum\limits_{m=k}^\infty  (\Pi P_\clm \Pi^*a_{m})z^{m+1}\\
&=\Pi P_\clm J_k\Pi^*\left(\sum\limits_{m=0}^{k-2}a_mz^m\right)+\Pi P_\clm(a_{k-1},0, \ldots,0)z^k+\sum\limits_{m=k}^\infty  (\Pi P_\clm \Pi^* a_{m})z^{m+1}\\
&= \Pi J_k P_\clm \Pi^* \left(\sum\limits_{m=0}^{k-2}a_mz^m\right)+\Pi P_\clm (J_k^*)^{k-1}(0, \ldots, 0, a_{k-1})z^k + \sum\limits_{m=k}^\infty  (\Pi P_\clm \Pi^* a_{m})z^{m+1}\\
&= \Pi J_k\Pi^*\Pi P_\clm \Pi^*\left(\sum\limits_{m=0}^{k-2}a_mz^m\right)+ \Pi (J_k^*)^{k-1} P_\clm (0, \ldots, 0, a_{k-1})z^k   +\sum\limits_{m=k}^\infty  (\Pi P_\clm \Pi^* a_{m})z^{m+1}\\
&=\Pi \Pi^* M_z \Pi  P_\clm \Pi^* \left(\sum\limits_{m=0}^{k-2}a_mz^m\right)+ (M_z^*)^{k-1} \Pi P_\clm \Pi^*(a_{k-1}z^{k-1})z^k   +\sum\limits_{m=k}^\infty  (\Pi P_\clm \Pi^*a_{m})z^{m+1}\\
&= M_z \Pi P_\clm \Pi^* \left(\sum\limits_{m=0}^{k-2}a_mz^m\right)+ \Pi P_\clm \Pi^*(a_{k-1}z^{k-1})z   +\sum\limits_{m=k}^\infty  (\Pi P_\clm \Pi^* a_{m})z^{m+1}\\
&=M_z\left( \Pi P_\clm \Pi^* \left(\sum\limits_{m=0}^{k-1}a_mz^m\right) +\sum\limits_{m=k}^\infty (\Pi P_\clm \Pi^*a_m)z^m \right)\\
&=M_zPf.
\end{align*}
The third last equality holds because $ M_z \Pi  P_\clm \Pi^* \left(\sum\limits_{m=0}^{k-2}a_mz^m\right) \in \clr(\Pi)$. Therefore, $PM_z=M_z P$ yields $P=T_\Phi$ for some $\Phi \in \hbe$. Since $P$ is a projection, $\Phi$ is a constant function, say $\phi_0$, on $\mathcal{E}$ such that $\phi_0=\phi_0^*=\phi_0^2$. Thus $P_\clm =\Pi^* T_{\phi_0}\Pi$ infers 
\begin{align*}
\clm =\Pi^*T_{\phi_0}\left([z^k\he]^\perp\right)& =\Pi^*T_{\phi_0}\left(\bigvee\limits_{m=0}^{k-1}\{z^m\eta: \eta \in \E\}\right) \\ 
& =\Pi^*\left(\bigvee\limits_{m=0}^{k-1}\{z^m\phi_0(\eta): \eta \in \E\}\right)\\
& =\underbrace{\phi_0(\E) \oplus \phi_0(\E) \oplus \cdots \oplus \phi_0(\E)}_{k\,\, times},
\end{align*}
where $\phi_0(\E)$ is a closed subspace of $\cle$. Hence any reducing subspace of $J_k$ is of the form $\cls \otimes \mathbb{C}^k$, where $\cls$ is a closed subspace of $\cle$. 

The converse is trivially true. The proof is completed.
\end{proof}

\begin{thm}
Let $J_k$ be the truncated shift of index $k$ on $\mathbb{E}_k =\cle_k \otimes \C^k$ for $k \geq 1$ and let $T= M_z^\cle \oplus (M_z^\clf)^* \oplus \bigoplus\limits_{k=1}^\infty J_k$ be a c.n.u.\ power partial isometry on $\mathcal{H}=H^2_\cle(\D) \oplus H^2_{\clf}(\D) \oplus \left(\bigoplus \limits_{k=1}^\infty \mathbb{E}_k\right)$. Then the only reducing subspaces of $T$ are those of the form $H^2_{\cle_*}(\D) \oplus H^2_{\clf_*}(\D) \oplus \left(\bigoplus\limits_{k=1}^\infty \cls_k \otimes \C^k\right)$, where $\cle_*, \clf_*$, and $\cls_k$ are closed subspaces of $\cle,\clf$, and $\cle_k$, respectively.
\end{thm}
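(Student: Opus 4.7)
The plan is to reduce this theorem to three elementary sub-problems: the reducing subspaces of $M_z^\cle$, of $(M_z^\clf)^*$, and of each truncated shift $J_k$ (the last being Theorem \ref{thm-reducingtruncatedshift}). Let $\clm$ be a reducing subspace of $T$, and let $P_\clm$ be the orthogonal projection onto $\clm$, which commutes with $T$ and $T^*$. First I would observe that $T|_\clm$ is again a c.n.u.\ power partial isometry: $(T|_\clm)^n$ is the restriction of a partial isometry to a reducing subspace and is thus itself a partial isometry, while any unitary direct summand of $T|_\clm$ would also be one of $T$, contradicting the c.n.u.\ hypothesis. Applying the Halmos--Wallen theorem to $T|_\clm$ then yields a decomposition
\[
\clm = \clm_s \oplus \clm_b \oplus \bigoplus_{k=1}^{\infty} \clm^{(k)},
\]
and the core task is to show this is compatible with the ambient splitting, namely that $\clm_s \subseteq H^2_\cle(\D)$, $\clm_b \subseteq H^2_\clf(\D)$, and $\clm^{(k)} \subseteq \mathbb{E}_k$.

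To prove this compatibility I would invoke Burdak's intrinsic formulas recalled in Section~2 and verify, for each Halmos--Wallen component, the identity $\clh_\star(T|_\clm) = \clh_\star(T) \cap \clm$. The key set-theoretic ingredients are
\[
T^n\clm = T^n\clh \cap \clm, \qquad T^{*n}\clm = T^{*n}\clh \cap \clm, \qquad \cln(T^*|_\clm) = \cln(T^*)\cap \clm,
\]
together with $T^n(\cln(T^*)\cap \clm) = T^n(\cln(T^*))\cap \clm$. These all follow from the partial-isometry identity $T^n T^{*n} T^n = T^n$ together with $T^{*(n+1)} T^n y = T^{*(n+1)}T^{n+1} T^* y = 0$ for $y \in \cln(T^*)$, which is precisely the computation already carried out in Section~2. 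Finally, the closed orthogonal direct sums appearing in Burdak's formulas intersect cleanly with $\clm$ because their summands are mutually orthogonal and the projections $E_k, F_k$ commute with $P_\clm$; this yields the desired compatibility and identifies each of $\clm_s, \clm_b, \clm^{(k)}$ as a reducing subspace of the corresponding direct summand of $T$.

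To conclude I would apply known characterizations. The reducing subspaces of $M_z^\cle$ on $H^2_\cle(\D)$ are exactly $H^2_{\cle_*}(\D)$ for closed $\cle_* \subseteq \cle$; this is a standard consequence of Beurling--Lax--Halmos, since any $M_z$-reducing subspace of the form $\Theta H^2_{\cle_*}(\D)$ forces $\Theta$ to be a constant isometry. An identical argument handles $(M_z^\clf)^*$, and Theorem \ref{thm-reducingtruncatedshift} identifies the reducing subspaces of $J_k$ as $\cls_k \otimes \C^k$. The converse inclusion is immediate. I expect the main obstacle to lie in the second step: while the compatibility of Burdak's formulas with restriction to $\clm$ is morally transparent, the formulas involve closed direct sums of countably many wandering subspaces together with decreasing intersections of ranges, so each ingredient must be tracked carefully using the $E_k, F_k$ machinery of Section~2.
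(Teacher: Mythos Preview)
Your proposal is correct and follows essentially the same strategy as the paper: both arguments hinge on Burdak's intrinsic formulas together with the fact that $P_\clm$ commutes with the projections $E_k,F_k$, and then finish by invoking the known reducing-subspace descriptions for $M_z^\cle$, $(M_z^\clf)^*$, and Theorem~\ref{thm-reducingtruncatedshift}. The only cosmetic difference is that the paper bypasses your second application of Halmos--Wallen to $T|_\clm$ and instead shows directly that $P_\clm \clh_i \subseteq \clh_i$ for each $i=s,b,k$ (equivalently $P_\clm P_{\clh_i}=P_{\clh_i}P_\clm$), which immediately gives $\clm=\bigoplus_i(\clh_i\cap\clm)$; this is slightly more economical but amounts to the same computation you outline.
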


\begin{proof}
Following Halmos-Wallen decomposition of the c.n.u.\ power partial isometry $T$ on $\clh$, we have $\mathcal{H}_s=H^2_\cle(\D), \clh_b=H^2_\clf(\D)$ and $\clh_k=\mathbb{E}_k$ for $k \geq 1$. 
Let $\clm$ be a reducing subspace of $T$.  We claim that $P_{\clh_i}\clm=\clh_i \cap \clm$ for $i=s,b,k \,\, \forall \,\,k\geq 1,$ and 
\[
\clm= P_{\clh_s}\clm \oplus P_{\clh_b}\clm \oplus \left(\bigoplus\limits_{k =1}^{\infty}P_{\clh_k} \clm\right)=(\clh_s\cap \clm) \oplus (\clh_b\cap \clm) \oplus \bigoplus\limits_{k = 1}^\infty(\clh_k\cap \clm).
\]
To prove the first claim, it suffices to show that $P_{\clh_i}\clm \subseteq \clm$
which is equivalent to prove that $P_{\clh_i}P_\clm =P_\clm P_{\clh_i}$, i.e., $P_\clm \clh_i \subseteq \clh_i$ for $i=s,b,k \,\, \forall \,\,k\geq 1$.

Let $x \in \clh_s= \bigcap\limits_{n=0}^\infty T^{*n}\clh \cap \bigoplus\limits_{n=0}^\infty T^n(\cln(T^*))$. Then, for $n \geq 0$, $x=T^{*n}x_n$ for some $x_n \in \clh$ and $x=\sum\limits_{n=0}^\infty T^n y_n$, where $y_n \in \cln(T^*)$. Now 
\[
P_\clm x=P_\clm T^{*n}x_n=T^{*n}P_\clm x_n \in T^{*n}\clh,
\]
where the second equality follows as $\clm$ reduces $T$ and hence $T^{*n}$. 
Also, 
\[
P_\clm x=P_\clm \left(\sum\limits_{n=0}^\infty T^ny_n\right)=\sum\limits_{n=0}^\infty  T^nP_\clm y_n\in  \bigoplus\limits_{n=0}^\infty T^n(\cln(T^*))
\]
as $T^*P_\clm y_n=P_\clm T^*y_n=0.$ Therefore, $P_\clm \clh_s \subseteq \clh_s$.

Interchanging the roles of $T$ and $T^*$ yields $P_\clm \clh_b \subseteq \clh_b$. By a similar argument, we can prove that $P_\clm \clh_k \subseteq \clh_k$ for all $k\geq 1$.

For the next claim, clearly $\clm \subseteq  P_{\clh_s} \clm \oplus P_{\clh_b}\clm \oplus \left(\bigoplus\limits_{k =1}^\infty P_{\clh_k}\clm \right)$. And since for all $i=b,s,k \geq 1$,  $P_{\clh_i}\clm =\clh_i \cap \clm \subseteq \clm$. Hence
\[
\clm = P_{\clh_s}\clm \oplus P_{\clh_b}\clm \oplus \left(\bigoplus\limits_{k =1}^{\infty}P_{\clh_k} \clm\right).
\]
Also, since $\clm$ reduces $T$, $\clh_i \cap \clm$ reduces $T|_{\clh_i}$ for each $i$. Hence any reducing subspace $\clm$ of $T$ is of type $\clm_s \oplus \clm_b \oplus \left(\bigoplus\limits_{k=1}^\infty \clm_k\right)$, where $\clm_i \subseteq \clh_i$ reduces $T|_{\clh_i}$ for $i=b,s,k \geq 1$.

As we know any reducing subspace of $M_z^{\cle}$ on $H^2_{\cle}(\D)$ is of the form $H^2_{\E_*}(\D)$, where $\E_*$ is a closed subspace of $\cle$ and with the help of Theorem \ref{thm-reducingtruncatedshift}, any reducing subspace of $T$  is of the form 
\[
\clm=H^2_{\E_*}(\D) \oplus H^2_{\F_*}(\D) \oplus \left(\bigoplus\limits_{k=1}^\infty \cls_k \otimes \mathbb{C}^k\right),
\]
where $\cle_*, \clf_*$, and $\cls_k$ are closed subspaces of $\cle,\clf$, and 
$\cle_k$, respectively.

Converse is easy to prove.
\end{proof}

\begin{rem} 
The above proof yields that reducing subspaces of a c.n.u.\ power partial isometry are always splitting subspaces. However, this does not hold in general. In fact, in the case of the identity operator $I_{\clh \oplus \clk}$ on $\clh \oplus \clk$, every subspace reduces $I_{\clh \oplus \clk}$ but not every subspace is splitting subspace.  
\end{rem}
\vspace{.3 cm}

\section{Invariant subspaces of a c.n.u.\ power partial isometry}

In this section we obtain the complete structure of invariant subspaces of the c.n.u.\ power partial isometry $T= (M_z^\clf)^* \oplus M_z^\cle\oplus \left( \bigoplus\limits_{k=1}^\infty J_k\right)$ on $H^2_\clf(\D) \oplus H^2_\cle(\D) \oplus \left(\bigoplus\limits_{k=1}^\infty \mathbb{E}_k\right) $, where  
$\mathbb{E}_k =\cle_k \otimes \C^k$. In order to find the invariant subspaces of $T$, first we make some reductions which are inspired by the work done by  Gu and Luo in \cite{guinvariant}.

We first obtain the invariant subspaces of $T^*=  M_z^\clf \oplus (M_z^\cle)^* \oplus \left(\bigoplus \limits_{k=1}^\infty J_k^*\right)$. To do that, first set $\cle'=\bigoplus\limits_{k=1}^\infty \cle_k$ and $\mathbb{E}= \bigoplus\limits_{k=1}^\infty \mathbb{E}_k$. Also $\mathbb{E}_k$ is identified with $[z^k H^2_{\cle_k}(\D)]^{\perp}$. Now onwards, we just write $\mathbb{E}_k =  [z^k H^2_{\cle_k}(\D)]^{\perp}$ for $k \geq 1$. Since $M_z^{\cle'}$ on $H^2_{\cle'}(\D)$ is the minimal isometric dilation of $\bigoplus\limits_{k=1}^\infty J_k$, thus we simply write
$\bigoplus\limits_{k=1}^\infty J_k =P_{\mathbb{E}}M_z^{\cle'}|_\mathbb{E}$, and 
$\bigoplus\limits_{k=1}^\infty J_k^*=(M_z^{\cle'})^*|_{\mathbb{E} }$, where $J_1 =0$ and
\begin{equation}\label{def}
J_k\left( \sum_{m=0}^{k-1} a_m z^m\right) = \sum_{m=0}^{k-2} a_m z^{m+1} \quad (k \geq 2).
\end{equation}

We make the following reductions to find the invariant subspaces of $T^*$. Let $\cln$ be an invariant subspace of $T^*$. Consider
\[
T_1=\begin{pmatrix}
M_z& 0\\
0 & M_z^*
\end{pmatrix}:H^2_{\clf}(\D)\oplus H^2_{\cle \oplus \cle'}(\D) \rightarrow H^2_{\clf}(\D)\oplus H^2_{\cle \oplus\cle'}(\D).
\] 
$\cln$ is invariant under $T^*$ implies $\cln$ is also invariant under $T_1$. Let
\[
T_2=\begin{pmatrix}
L_z & 0\\
0& M_z^*
\end{pmatrix}: L^2_{\clf} \oplus H^2_{\cle \oplus \cle'}(\D) \rightarrow L^2_{\clf} \oplus H^2_{\cle \oplus \cle'}(\D).
\]
Now $\cln\subseteq H^2_\clf(\D) \oplus H^2_\cle(\D) \oplus \mathbb{E}$ is $T_1$-invariant infers that $\cln$ is $T_2$-invariant.
Again let
\[
T_3=\begin{pmatrix}
L_z & 0\\
0 & M_z 
\end{pmatrix}:  L^2_{\clf} \oplus H^2_{\cle \oplus \cle'}(\D) \rightarrow L^2_{\clf} \oplus H^2_{\cle \oplus \cle'}(\D).
\]
Recall the self-adjoint unitary map $J: L^2_{\clf} \rightarrow L^2_{\clf}$ defined as
\[
Jf(e^{it})=f(e^{-it}) \quad \quad ( f \in L^2_{\clf}, e^{it} \in \T).
\]
Note that $L_z J=JL_z^*$ on $L^2_\clf$. Define
\[
J'=\begin{pmatrix}
J & 0\\
0& I
\end{pmatrix}:  L^2_{\clf} \oplus H^2_{\cle \oplus \cle'}(\D) \rightarrow L^2_{\clf} \oplus H^2_{\cle \oplus \cle'}(\D).
\]
Clearly, $\cln$ is invariant under $T_2$ if and only if $\cln':=J'\cln$ is invariant under $T_3^*$. Hence, the problem reduces to finding the invariant subspace $\cln'$ of $T_3^*$ such that 
\[
	\cln=J'\cln' \subseteq H^2_\clf(\D) \oplus H^2_\cle(\D) \oplus \mathbb{E}.
\]
or
\[
	\cln'\subseteq \clk \oplus H^2_\cle(\D) \oplus \mathbb{E}, \text{ where } \clk=\bigvee\{e^{imt}\eta:\eta \in \clf, m \leq 0\}.
\]
It is equivalent of saying that  
$\cln'':=\left( L^2_{\clf} \oplus H^2_{\cle \oplus \cle'}(\D)\right) \ominus \cln'$ is invariant subspace of $T_3$ such that $zH^2_{\clf}(\D) \oplus \left(\bigoplus\limits_{k=1}^\infty z^kH^2_{\cle_k}(\D)\right) \subseteq \cln''$. Consider
$$T_4=\begin{pmatrix}
L_z & 0\\
0 & L_z
\end{pmatrix}:  L^2_{\clf} \oplus L^2_{\cle \oplus \cle'} \rightarrow  L^2_{\clf} \oplus L^2_{\cle \oplus \cle'}.$$
Thus $\cln''$ is invariant under $T_3$ infers that $\cln''$ is also invariant under $T_4$ such that $\cln'' \subseteq  L^2_{\clf} \oplus H^2_{\cle \oplus \cle'}(\D)$. By Helson's results in \cite{helson}, the invariant subspaces of the bilateral shift $T_4$ are precisely those of the form 
\[
\cln''= \Phi H^2_{\clf_0}(\D) \oplus \clm_K,
\]
where $\clf_0 \subseteq \clf\oplus\cle \oplus \cle'$ , $\Phi \in L^\infty_{\clb(\clf_0,\clf\oplus \cle \oplus \cle')}$ is zero or isometric-valued function and $K$ is a measurable range function on $\clf$ such that $K(e^{it}) \perp \Phi(e^{it})\clf_0$ a.e.\ on $ \T$. Note that $\clm_K$ is $L_z$-reducing.
Let $P_\clf$, $P_\cle$, and $P_{\cle'}$ be the orthogonal projections from $\clf \oplus \cle \oplus \cle'$ onto $\clf,\cle$, and $\cle'$, respectively.
Set 
\[
\Phi(e^{it})=\begin{pmatrix}
\Phi_\clf({e^{it}})\\
\Phi_\cle(e^{it})\\
\Phi_{\cle'}(e^{it})
\end{pmatrix},
\]
where $\Phi_\clf(e^{it}) =P_\clf \Phi(e^{it})$, $\Phi_\cle(e^{it})=P_\cle\Phi(e^{it})$, and $\Phi_{\cle'}(e^{it})=P_{\cle'}\Phi(e^{it})$. Then $\Phi_\clf  \in L^\infty_{\clb(\clf_0,\clf)}$, $\Phi_\cle \in  L^\infty_{\clb(\clf_0,\cle)}$, and $\Phi_{\cle'} \in   L^\infty_{\clb(\clf_0,\cle')}$.

As proved  in \cite[Lemma 3.5]{guinvariant}, $\cln'' \subseteq  L^2_{\clf} \oplus H^2_{\cle \oplus \cle'}(\D)$ if and only if   $\clm_K \subseteq L_\clf^2$, $\Phi_{\cle} \in H^\infty_{\clb(\clf_0,\cle)}$, and  $\Phi_{\cle'} \in H^\infty_{\clb(\clf_0, \cle')}$. 

Note that $\cln= \left(L^2_\clf\oplus H^2_{\cle\oplus\cle'}(\D)\right)\ominus J'\cln''$ and  $\cln\subseteq H^2_{\clf}(\D) \oplus H^2_\cle(\D) \oplus \mathbb{E}$. Let $f \in H^2_{\clf}(\D)$, $g \in H^2_{\cle}(\D)$, and $h \in \mathbb{E}$. Then $f \oplus g \oplus h \perp J'\cln''$ if and only if 
\begin{enumerate}
\item $Jf \perp \clm_K$, and \label{1}
\item $Jf \oplus g \oplus h \perp \Phi H^2_{\clf_0}(\D)$. \label{2}
\end{enumerate}

For (\ref{1}), let 
\[
\clh_1=\{u \in H^2_{\clf}(\D): u \perp \clm_{K(e^{-it})}\}.
\]
By \cite[Lemma 4.7]{guinvariant}, $\clh_1$ is invariant under $M_z^\clf$ and when $\clh_1 \neq \{0\}$, there exist a Hilbert space $\clf_1 \subseteq \clf$ and an inner and  right extremal function $\Theta \in H^\infty_{\clb(\clf_1,\clf)}$ such that 
\[
\clh_1 =\Theta H^2_{\clf_1}(\D). 
\]

And for (\ref{2}), $Jf \oplus g \oplus h \perp \Phi H^2_{\clf_0}(\D)$ if and only if  $\Phi^*(Jf \oplus g \oplus h) \perp H^2_{\clf_0}(\D)$.
Again
\begin{align*}
\Phi^*(Jf\oplus g\oplus h) \perp H^2_{\clf_0}(\D) \Leftrightarrow& P_+^{\clf_0}\Phi^*(Jf \oplus g\oplus h)=0\\
	\Leftrightarrow & P_+^{\clf_0}(\Phi_\clf^*Jf+\Phi_\cle^*g+ \Phi_{\cle'}^* h)=0\\
	\Leftrightarrow & H_{\Phi_\clf}^*f+ T_{\Phi_\cle}^*g + T_{\Phi_{\cle'}}^*h=0\\
	\Leftrightarrow&  f \oplus  g \oplus h \in 
	\cln \left(\begin{bmatrix}
H_{\Phi_\clf}^* & T_{\Phi_\cle}^* & T_{\Phi_{\cle'}}^*\big|_{\mathbb{E}}
\end{bmatrix}\right).
\end{align*}
Hence 
\[
\cln= \cln(W_\Phi) \cap \left(\Theta H^2_{\clf_1}(\D) \oplus H^2_{\cle}(\D) \oplus \mathbb{E}\right),
\]
where 
\[
W_\Phi=
\begin{bmatrix}
H_{\Phi_\clf}^* & T_{\Phi_\cle}^* & T_{\Phi_{\cle'}}^*\big|_{\mathbb{E}}
\end{bmatrix}: H^2_\clf(\D) \oplus H^2_{\cle}(\D) \oplus \mathbb{E} \rightarrow H^2_{\clf_0}(\D),
\]
and $\Theta \in H^\infty_{\clb(\clf_1,\clf)}$ is inner and right extremal function.
Hence
\[
\clm=\left(H^2_\clf(\D) \oplus H^2_\cle(\D) \oplus \mathbb{E}\right) \ominus\cln=\bigvee\left\{\clr(W_\Phi^*), \left(\cln(T_\Theta^*) \oplus \{0\} \oplus \{0\}\right)\right\}.
\]
The converse is also true. Indeed, 
\[
W_\Phi^*=\begin{bmatrix}
H_{\Phi_\clf} & T_{\Phi_\cle} &P_\mathbb{E} T_{\Phi_{\cle'}}
\end{bmatrix}^T.
\]
So we have
\[
	\begin{pmatrix}
	(M_z^\clf)^* &0 &0\\
 	0& M_z^\cle  &0\\
	0&0  & \bigoplus\limits_{k=1}^\infty J_k 
	\end{pmatrix}
	\begin{pmatrix}
	H_{\Phi_\clf} \\
	 T_{\Phi_\cle} \\
	 P_{\mathbb{E}} T_{\Phi_{\cle'}}
	\end{pmatrix}=\begin{pmatrix}
	(M_z^\clf)^*H_{\Phi_\clf}\\
	M_z^\cle T_{\Phi_{\cle}}\\
	\bigoplus\limits_{k=1}^\infty J_kP_{\mathbb{E}}T_{\Phi_{\cle'}}
	\end{pmatrix}=
	\begin{pmatrix}
	H_{\Phi_\clf}M_z^{\clf_0}\\
	T_{\Phi_{\cle}}M_z^{\clf_0}\\
	P_\mathbb{E}T_{\Phi_{\cle'}}M_z^{\clf_0}
	\end{pmatrix}=W_\Phi^* M_z^{\clf_0}.
\]
The second last equality holds because of the fact that  $M_z^{\cle'}$ on $H^2_{\cle'}(\D)$  is the minimal isometric dilation of $\bigoplus\limits_{k=1}^\infty J_k$ and hence 
\[
\left(\bigoplus\limits_{k=1}^\infty J_k\right)P_{\mathbb{E}}=P_{\mathbb{E}}M_z^{\cle'}.
\]
Hence $\clr(W_\Phi^*)$ is invariant under $T$. Also by Beurling-Lax-Halmos theorem, $\cln(T_\Theta^*) \oplus \{0\}\oplus\{0\}$ is  $T$-invariant. Thus $\clm$ is $T$-invariant. 
To sum up the above, we have the following result:

\begin{thm}\label{inv}
A closed subspace $\clm$ of $H^2_\clf(\D) \oplus H^2_\cle(\D) \oplus \left(\bigoplus\limits_{k=1}^\infty \mathbb{E}_k\right) $ is an invariant subspace for $T=(M_z^\clf)^* \oplus M_z^\cle \oplus \left(\bigoplus\limits_{k=1}^\infty J_k\right)$  if and only if 
\[
\clm=\bigvee\left\{\clr(W_\Phi^*), (\cln(T_\Theta^*)\oplus\{0\}\oplus \{0\})\right\},
\]
where $\mathbb{E}_k =\cle_k \otimes \C^k$, $W_\Phi=
\begin{bmatrix}
	H_{\Phi_\clf}^* & T_{\Phi_\cle}^* & T_{\Phi_{\cle'}}^*\big|_{\mathbb{E}}
\end{bmatrix}: H^2_\clf(\D) \oplus H^2_{\cle}(\D)\oplus \mathbb{E} \rightarrow H^2_{\clf_0}(\D)$ with $\cle'=\bigoplus\limits_{k=1}^\infty \cle_k$, $\Phi\in L^\infty_{\clb(\clf_0,\clf\oplus\cle\oplus\cle')}$ is either zero or $\Phi(e^{it})=
\begin{pmatrix}
\Phi_\clf({e^{it}})\\
\Phi_\cle(e^{it})\\
\Phi_{\cle'}(e^{it})
\end{pmatrix}$ for $\Phi_{\clf} \in L^\infty_{\clb(\clf_0,\clf)}$, $\Phi_\cle \in H^\infty_{\clb(\clf_0,\cle)}$, and $\Phi_{\cle'} \in H^\infty_{\clb(\clf_0,\cle')}$ satisfying  $\Phi(e^{it})^*\Phi(e^{it})=I_{\clf_0}$ a.e.\ on  $\T$, where $\clf_0 \subseteq \clf \oplus \cle \oplus \cle'$ and $\Theta \in H^\infty_{\clb(\clf_1,\clf)}$ is zero or inner and right extremal with $\clf_1 \subseteq \clf$.
\end{thm}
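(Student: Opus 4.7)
The strategy is to pass to $T^{*}$ and reduce, through a sequence of enlargements of the ambient space, to invariant subspaces of a bilateral shift, for which Helson's theorem supplies an explicit classification. Concretely, I would begin with an invariant subspace $\cln$ of $T^{*}=M_z^{\clf}\oplus(M_z^{\cle})^{*}\oplus\bigl(\bigoplus_{k}J_k^{*}\bigr)$. Setting $\cle'=\bigoplus_{k}\cle_k$ and using the fact that $M_z^{\cle'}$ on $H^{2}_{\cle'}(\D)$ is the minimal isometric dilation of $\bigoplus_{k}J_k$, so that $\bigoplus_{k}J_k^{*}=(M_z^{\cle'})^{*}|_{\mathbb{E}}$, one sees that $\cln$ is also invariant under the block operator $T_1=M_z^{\clf}\oplus(M_z^{\cle\oplus\cle'})^{*}$. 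Enlarging the first coordinate from $H^{2}_{\clf}(\D)$ to $L^{2}_{\clf}$ and conjugating by the flip $J'=J\oplus I$ (recalling $L_zJ=JL_z^{*}$) turns the problem into classifying invariant subspaces of the bilateral shift $T_4=L_z\oplus L_z$ on $L^{2}_{\clf}\oplus L^{2}_{\cle\oplus\cle'}$, subject to the containment $zH^{2}_{\clf}(\D)\oplus\bigl(\bigoplus_{k}z^{k}H^{2}_{\cle_k}(\D)\bigr)\subseteq\cln''$, where $\cln''=\bigl(L^{2}_{\clf}\oplus H^{2}_{\cle\oplus\cle'}(\D)\bigr)\ominus J'\cln$.

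Helson's theorem then represents $\cln''=\Phi H^{2}_{\clf_0}(\D)\oplus\clm_K$ with $\Phi$ either zero or isometric-valued and $K$ a measurable range function. Decomposing $\Phi$ into its components $\Phi_\clf$, $\Phi_\cle$, $\Phi_{\cle'}$ and imposing $\cln''\subseteq L^{2}_{\clf}\oplus H^{2}_{\cle\oplus\cle'}(\D)$ forces $\Phi_\cle\in H^{\infty}_{\clb(\clf_0,\cle)}$, $\Phi_{\cle'}\in H^{\infty}_{\clb(\clf_0,\cle')}$, and $\clm_K\subseteq L^{2}_{\clf}$, essentially as in \cite[Lemma~3.5]{guinvariant}. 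Next I would unravel membership in $\cln=J'(\cln'')^{\perp}\cap(H^{2}_\clf(\D)\oplus H^{2}_\cle(\D)\oplus\mathbb{E})$: the orthogonality $Jf\perp\clm_K$ cuts out an $M_z^{\clf}$-invariant subspace $\clh_1\subseteq H^{2}_\clf(\D)$, hence by Beurling--Lax--Halmos $\clh_1=\Theta H^{2}_{\clf_1}(\D)$ for some inner right extremal $\Theta$; the orthogonality $Jf\oplus g\oplus h\perp\Phi H^{2}_{\clf_0}(\D)$ rewrites as $P_+^{\clf_0}\Phi^{*}(Jf\oplus g\oplus h)=0$, which via the defining intertwinings of Hankel and analytic Toeplitz operators splits into the three pieces $H_{\Phi_\clf}^{*}f+T_{\Phi_\cle}^{*}g+T_{\Phi_{\cle'}}^{*}|_{\mathbb{E}}h=0$, i.e., $f\oplus g\oplus h\in\cln(W_\Phi)$. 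Taking orthogonal complements and using $(A\cap B)^{\perp}=\overline{A^{\perp}+B^{\perp}}$ yields the claimed $\clm=\bigvee\{\clr(W_\Phi^{*}),\,\cln(T_\Theta^{*})\oplus\{0\}\oplus\{0\}\}$.

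For the converse, I would check the $T$-invariance of each joined piece separately. The identity $TW_\Phi^{*}=W_\Phi^{*}M_z^{\clf_0}$ can be verified componentwise from $(M_z^{\clf})^{*}H_{\Phi_\clf}=H_{\Phi_\clf}M_z^{\clf_0}$, $M_z^{\cle}T_{\Phi_\cle}=T_{\Phi_\cle}M_z^{\clf_0}$, and the dilation identity $\bigl(\bigoplus_{k}J_k\bigr)P_{\mathbb{E}}=P_{\mathbb{E}}M_z^{\cle'}$; this gives $T$-invariance of $\clr(W_\Phi^{*})$. The $T$-invariance of $\cln(T_\Theta^{*})\oplus\{0\}\oplus\{0\}$ is immediate from Beurling--Lax--Halmos. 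The main obstacle I anticipate is the careful bookkeeping along the chain $T^{*}\leadsto T_1\leadsto T_2\leadsto T_3\leadsto T_4$: one must be sure the containment constraint on $\cln''$ translates back correctly to analyticity being imposed exactly on $\Phi_\cle$ and $\Phi_{\cle'}$ but not on $\Phi_\clf$, and that the two orthogonality conditions from Helson's direct sum $\Phi H^{2}_{\clf_0}(\D)\oplus\clm_K$ produce, respectively, the operator-kernel constraint and the Beurling--Lax--Halmos factor $\Theta$, with no hidden overlap or missing piece when one contracts the picture back from $L^{2}$ to $H^{2}$.
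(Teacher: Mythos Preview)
Your proposal is correct and follows essentially the same approach as the paper: the same reduction chain $T^{*}\leadsto T_1\leadsto T_2\leadsto T_3\leadsto T_4$ via the dilation $M_z^{\cle'}$ of $\bigoplus_k J_k$ and the flip $J'$, the same application of Helson's theorem and of \cite[Lemma~3.5]{guinvariant} to extract the analyticity of $\Phi_\cle,\Phi_{\cle'}$, the same splitting of the two orthogonality conditions into the Beurling--Lax--Halmos factor $\Theta$ and the kernel description $\cln(W_\Phi)$, and the same componentwise verification of $TW_\Phi^{*}=W_\Phi^{*}M_z^{\clf_0}$ for the converse. The only cosmetic difference is that you package the final passage from $\cln$ to $\clm$ via the identity $(A\cap B)^{\perp}=\overline{A^{\perp}+B^{\perp}}$, which the paper leaves implicit.
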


\begin{rem}
In particular, let $T=J_k$, a truncated shift of index $k$ on $\mathbb{E}_k$.  Then by above theorem, $W_\Phi=T_{\Phi}^*|_{\mathbb{E}_k}$, $\Phi \in H^\infty_{\clb(\clf_0,\cle_k)}$ such that $\Phi(e^{it})^*\Phi(e^{it})=I_{\clf_0}$  a.e.\ on $\T$. Therefore, a proper closed subspace $\clm$ is invariant under $J_k^*$ if and only if $\clm=\cln(T_{\Phi}^*) \cap \mathbb{E}_k$, where $\Phi$ is an inner function. Now since $J_k^*=M_z^*|_{\mathbb{E}_k}$, by Beurling-Lax-Halmos theorem, there exist  a Hilbert space $\clf_1 \subseteq \cle_k$ and $\Theta \in H^\infty_{\clb(\clf_1,\cle_k)}$ inner function such that 
\[
\cln(T_{\Phi}^*) \cap \mathbb{E}_k =H^2_{\cle_k}(\D) \ominus \Theta H^2_{\clf_1}(\D).
\]
\end{rem}

Using this fact, we  have a refined  and independent proof for characterizing the invariant subspaces of a truncated shift which is based on the Sz.-Nagy and Foia\c{s}' dilation.

\begin{thm}
Let $J_k$ be a truncated shift of index $k$ on $\mathbb{E}_k=\cle_k \otimes \C^k$ for $k \geq 1$. A  closed subspace $\clm$ of $\mathbb{E}_k$  is  invariant under $J_k$ if and only if there exist a Hilbert space $\clf_k $ isomorphic to $\cle_k $ and inner functions  $\Theta \in H^\infty_{\clb(\clf_k ,\cle_k )}$ and $ \Phi \in H^\infty_{\clb(\cle_k ,\clf_k )}$ such that $T_\Theta T_\Phi=M_{z^k}^{\cle_k} $ and $\clm =T_\Theta(\cln(T_\Phi^*))$.
\end{thm}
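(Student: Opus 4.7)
The plan is to lift the problem to the ambient Hardy space $H^2_{\cle_k}(\D)$, which is the minimal isometric dilation of $J_k$, and then invoke the Beurling--Lax--Halmos theorem. First I would note that since $\mathbb{E}_k = H^2_{\cle_k}(\D) \ominus z^k H^2_{\cle_k}(\D)$ is $M_z^*$-invariant, the restriction $M_z^*|_{\mathbb{E}_k}$ coincides with $J_k^*$. Consequently, a closed subspace $\clm \subseteq \mathbb{E}_k$ is $J_k$-invariant if and only if $\mathbb{E}_k \ominus \clm$ is $M_z^*$-invariant, which is equivalent to saying that
\[
\clm \oplus z^k H^2_{\cle_k}(\D) \;=\; H^2_{\cle_k}(\D) \ominus (\mathbb{E}_k \ominus \clm)
\]
is an $M_z$-invariant subspace of $H^2_{\cle_k}(\D)$. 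Beurling--Lax--Halmos then furnishes a closed subspace $\clf_k$ and an inner function $\Theta \in H^\infty_{\clb(\clf_k,\cle_k)}$ with $\clm \oplus z^k H^2_{\cle_k}(\D) = \Theta H^2_{\clf_k}(\D)$.

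The technical heart of the argument, and the main obstacle, is to extract the second inner function $\Phi$ from the inclusion $z^k H^2_{\cle_k}(\D) \subseteq \Theta H^2_{\clf_k}(\D)$. Since $T_\Theta$ is a range-isometric embedding intertwining $M_z^{\clf_k}$ with $M_z^{\cle_k}$, the preimage $T_\Theta^*(z^k H^2_{\cle_k}(\D))$ is a closed $M_z$-invariant subspace of $H^2_{\clf_k}(\D)$ on which $T_\Theta$ acts isometrically with image $z^k H^2_{\cle_k}(\D)$. A second application of Beurling--Lax--Halmos, combined with the identity $M_z^{\clf_k} T_\Phi = T_\Phi M_z^{\cle_k}$, produces an analytic Toeplitz operator $T_\Phi$ with symbol $\Phi \in H^\infty_{\clb(\cle_k,\clf_k)}$ satisfying $T_\Theta T_\Phi = M_{z^k}^{\cle_k}$. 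Because $M_{z^k}^{\cle_k}$ and $T_\Theta$ are isometric, so is $T_\Phi$, and hence $\Phi$ is inner. The pointwise identity $\Theta(e^{it})\Phi(e^{it}) = e^{ikt} I_{\cle_k}$ holds a.e.\ on $\T$; being an equality between a product of two isometries and a unitary, it forces both $\Theta(e^{it})$ and $\Phi(e^{it})$ to be unitary a.e. In particular $\dim \clf_k = \dim \cle_k$, so $\clf_k$ is isomorphic to $\cle_k$, as required.

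With these ingredients in hand, the necessity direction follows from the direct computation
\[
\clm = \Theta H^2_{\clf_k}(\D) \ominus z^k H^2_{\cle_k}(\D) = T_\Theta H^2_{\clf_k}(\D) \ominus T_\Theta T_\Phi H^2_{\cle_k}(\D) = T_\Theta\bigl(H^2_{\clf_k}(\D) \ominus T_\Phi H^2_{\cle_k}(\D)\bigr) = T_\Theta(\cln(T_\Phi^*)),
\]
where in the third equality I use that $T_\Theta$ is an isometry and hence preserves orthogonal complements inside its range. For the converse, given inner $\Theta$ and $\Phi$ with $T_\Theta T_\Phi = M_{z^k}^{\cle_k}$, the subspace $T_\Theta(\cln(T_\Phi^*)) = \Theta H^2_{\clf_k}(\D) \ominus z^k H^2_{\cle_k}(\D)$ is automatically contained in $\mathbb{E}_k$, and it is $J_k$-invariant because both $\Theta H^2_{\clf_k}(\D)$ and $z^k H^2_{\cle_k}(\D)$ are $M_z$-invariant, so their orthogonal difference is invariant under $P_{\mathbb{E}_k} M_z|_{\mathbb{E}_k} = J_k$.
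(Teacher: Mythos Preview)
Your argument is correct and follows essentially the same route as the paper: dilate $J_k$ to $M_z$ on $H^2_{\cle_k}(\D)$, apply Beurling--Lax--Halmos to $\clm \oplus z^k H^2_{\cle_k}(\D)$ to obtain $\Theta$, and then factor $M_{z^k}^{\cle_k} = T_\Theta T_\Phi$. The only differences are in sub-steps---the paper extracts $\Phi$ via Douglas' range inclusion theorem and obtains $\dim\clf_k=\dim\cle_k$ by citing an external result, whereas your direct pointwise argument that $\Theta(e^{it})\Phi(e^{it})=e^{ikt}I_{\cle_k}$ forces both boundary values to be unitary is cleaner and self-contained; similarly, your structural converse (both pieces $M_z$-invariant) replaces the paper's explicit computation of $J_k(T_\Theta x)$.
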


\begin{proof}
Suppose $\clm$ is an invariant subspace for $J_k$. Since $M_z^{\cle_k}$ on $H^2_{\cle_k}(\D)$ is the minimal isometric dilation of $J_k$ on $\mathbb{E}_k=[z^kH^2_{\cle_k} (\D)]^\perp$, so 
\[
J_k=P_{\mathbb{E}_k}M_z|_{\mathbb{E}_k} \text{ and } J_k^*=M_z^*|_{\mathbb{E}_k}.
\]
Now $J_k(\clm) \subseteq \clm$ infers  $J_k^*(\mathbb{E}_k \ominus \clm) \subseteq \mathbb{E}_k\ominus \clm$. Therefore, $M_z^*(\mathbb{E}_k\ominus \clm) \subseteq \mathbb{E}_k \ominus \clm$ and hence, by Beurling-Lax-Halmos theorem, there exist a Hilbert space $\clf_k  \subseteq \cle_k $ and an inner function $\Theta \in H^\infty_{\mathcal{B}(\clf_k ,\cle_k  )}$ such that $\mathbb{E}_k \ominus \clm =[\Theta H^2_{\clf_k }(\D)]^\perp$. Thus 
\[
\clm=\Theta H^2_{\clf_k }(\D) \cap \mathbb{E}_k=\Theta H^2_{\clf_k }(\D) \cap [z^k H^2_{\cle_k}(\D)]^\perp.
\]
Again,
$[\Theta H^2_{\clf_k }(\D)]^\perp \subseteq \mathbb{E}_k$ gives
\[
\mathbb{E}_k^\perp=M_{z^k}^{\cle_k}H^2_{\cle_k}(\D) \subseteq T_{\Theta} H^2_{\clf_k }(\D).
\]
By Douglas' range inclusion theorem \cite{DOUGLOUS}, there exists a contraction $X: H^2_{\cle_k}(\D) \rightarrow H^2_{\clf_k }(\D)$ such that $M_{z^k}^{\cle_k} =T_\Theta X$. 
Then
\[
T_\Theta XM_z^{\cle_k} = M_{z^k}^{\cle_k}  M_z^{\cle_k }=M_z^{\cle_k}  M_{z^k}^{\cle_k} =M_z^{\cle_k}  T_\Theta X=T_\Theta M_z^{\clf_k}   X.
\]
Therefore, $XM_z^{\cle_k }=M_z^{\clf_k }X$ and hence $X=T_\Phi$ for some $\Phi \in H^\infty_{\mathcal{B}(\cle_k  ,\clf_k )}$. Since $M_{z^k}^{\cle_k} $ and $T_\Theta$ are both isometries, $T_\Phi$ is an isometry from $H^2_{\cle_k}(\D)$ to $H^2_{\clf_k }(\D)$. Since $T_\Phi M_z^{\cle_k }=M_z^{\clf_k }T_\Phi$,  by \cite[Theorem 7.1]{AMS},
$\dim\,\cle_k  \leq \dim\,\clf_k $. But $\clf_k  \subseteq \cle_k  $ which implies $\dim\,\cle_k = \dim\,\clf_k $. In turn, $\cle_k  $ is isomorphic to $\clf_k $ and $T_\Theta T_\Phi=M_{z^k}^{\cle_k}$. Also, we have 
\begin{align*}
\clm =T_\Theta H^2_{\clf_k}  (\D)\ominus M_{z^k}^{\cle_k}  H^2_{\cle_k} (\D)
= T_\Theta H^2_{\clf_k}  (\D)\ominus T_\Theta T_\Phi H^2_{\cle_k} (\D).
\end{align*}
Then it is easy to see that 
\[
\clm=T_\Theta\left(\cln(T_\Phi^*)\right).
\]

Conversely, let there exist a Hilbert space $\clf_k $ isomorphic to $\cle_k $ and inner functions $\Theta \in H^\infty_{\clb(\clf_k ,\cle_k )}$ and $ \Phi \in H^\infty_{\clb(\cle_k ,\clf_k )}$ such that $T_\Theta T_\Phi=M_{z^k}^{\cle_k} $ and  $\clm=T_\Theta\left(\cln(T_\Phi^*)\right)$. Then for $x \in \cln(T_\Phi^*)$,
\begin{align*}
		J_k\left(T_\Theta x\right) &=P_{\mathbb{E}_k} M_z^{\cle_k }T_\Theta x\\
					&= \left(I-M_{z^k}^{\cle_k}  (M_{z^k}^{\cle_k} )^*\right)T_\Theta M_z^{\clf_k } x\\
					&=  \left(I- T_\Theta T_\Phi (M_{z^k}^{\cle_k} )^*\right)T_\Theta M_z^{\clf_k } x\\		
					&=T_\Theta\left( I-T_\Phi T_\Phi^*\right)M_z^{\clf_k } x\\
					&\subseteq T_\Theta\left(\cln(T_\Phi^*)\right).
\end{align*}
Hence $\clm$ is invariant under $J_k$. This completes the proof.
\end{proof}

\begin{rem}
In Theorem \ref{inv}, take  $\clf_0=\clf\oplus \cle\oplus\cle'$ and an  inner and right extremal function $\Theta \in H^\infty_{\clb(\clf_1,\clf)}$ with $\clf_1 \subseteq \clf$. Define $\Phi: H^2_\clf(\D) \oplus H^2_\cle(\D) \oplus \left(\bigoplus\limits_{k=1}^\infty H^2_{\cle_k}(\D)\right) \rightarrow H^2_{\clf_0}(\D)$ by
\[
\Phi=\begin{pmatrix}
z\Phi_\clf& & & &\\
& \Phi_\cle & & &\\
& & \Phi_{1} & & \\
& & & \Phi_{2} & \\
& & & &\ddots
\end{pmatrix},
\]
where $\Phi_\clf$, $\Phi_\cle$, and $\Phi_k\,\, (k \geq 1)$ are inner functions in $H^\infty_{\clb(\clf)}$, $H^\infty_{\clb(\cle)}$, and $H^\infty_{\clb(\cle_k)}$ respectively. Then,
\[
W_\Phi=
\begin{pmatrix}
H_{z\Phi_\clf}^* &&&&\\
& T_{\Phi_\cle}^*&&&\\
&&T_{\Phi_{1}}^*\big|_{\mathbb{E}_1}&&\\
&&&T_{\Phi_{2}}^*\big|_{\mathbb{E}_2}&\\
&&&&\ddots
\end{pmatrix}.
\]
Then $\cln(W_\Phi)=H^2_\clf(\D) \oplus \cln(T_{\Phi_{\cle}}^*) \oplus \bigoplus\limits_{k=1}^\infty \cln(T_{\Phi_{k}}^*\big|_{\mathbb{E}_k})$. As stated earlier, for each $k \geq 1$, there exist Hilbert spaces $\clf_k$ and inner functions $\Theta_k \in H^\infty_{\clb(\clf_k,\cle_k)}$ such that
\[
\cln(T_{\Phi_{k}}^*\big|_{\mathbb{E}_k}) =H^2_{\cle_k}(\D) \ominus \Theta_k H^2_{\clf_k}(\D).
\]
Hence,
\[
\clr(W_\Phi^*)=\{0\} \oplus \clr(T_{\Phi_{\cle}}) \oplus \bigoplus\limits_{k=1}^\infty \left(\clr(T_{\Theta_k}) \cap \mathbb{E}_k\right).
\]
So, $\clm=\cln(T_\Theta^*) \oplus \clr(T_{\Phi_{\cle}}) \oplus \bigoplus\limits_{k=1}^\infty \left(\clr(T_{\Theta_k}) \cap \mathbb{E}_k\right)$. In this case, $\clm$ is a splitting subspace of $T$.
\end{rem}
\vspace{.3 cm}

\section{Hyperinvariant subspaces of a c.n.u. power partial isometry}

In this section, we characterize all the hyperinvariant subspaces of a c.n.u.\ power partial isometry. We will use the following lemmas repeatedly in the main proof. The first lemma was given by Douglas and Pearcy in \cite{douglastopology}.

\begin{lem}\label{Lemma1}
Let $\clh_1$ and $\clh_2$ be two Hilbert spaces and $T \otimes I_{\clh_2} \in \clb(\clh_1 \otimes \clh_2)$. A closed subspace $\clm$ is hyperinvariant for $T \otimes I_{\clh_2}$ if and only if it is of the form $\clm=\cls \otimes \clh_2$, where $\cls$ is hyperinvariant for $T$.
\end{lem}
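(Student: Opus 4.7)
The plan is to exploit the identification $\clh_1\otimes\clh_2\cong\bigoplus_{\alpha}\clh_1$ (one copy for each element of an orthonormal basis $\{e_\alpha\}$ of $\clh_2$), together with the observation that the commutant of $T\otimes I_{\clh_2}$ is rich: it contains both $\{R\otimes I_{\clh_2}:R\in\{T\}'\}$ and all of $I_{\clh_1}\otimes\clb(\clh_2)$. These two families of commutants drive, respectively, the hard and easy reductions.

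For the sufficiency direction, assume $\cls$ is hyperinvariant for $T$ and fix any $R\in\clb(\clh_1\otimes\clh_2)$ commuting with $T\otimes I_{\clh_2}$. For each pair $y_1,y_2\in\clh_2$ I would define a ``slice'' $R_{y_1,y_2}\in\clb(\clh_1)$ by $\langle R_{y_1,y_2}x,x'\rangle=\langle R(x\otimes y_1),x'\otimes y_2\rangle$. A short calculation using $R(T\otimes I)=(T\otimes I)R$ shows $R_{y_1,y_2}T=TR_{y_1,y_2}$, so every slice lies in $\{T\}'$ and therefore preserves $\cls$. Expanding $R(s\otimes y)=\sum_\alpha R_{y,e_\alpha}(s)\otimes e_\alpha$ in the basis $\{e_\alpha\}$ shows $R(s\otimes y)\in\cls\otimes\clh_2$, and continuity gives $R(\cls\otimes\clh_2)\subseteq\cls\otimes\clh_2$.

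For the necessity direction, let $\clm$ be hyperinvariant for $T\otimes I_{\clh_2}$. The first step is to deduce that $\clm$ must split as $\cls\otimes\clh_2$ for some closed $\cls\subseteq\clh_1$. Since every $I_{\clh_1}\otimes S$ with $S\in\clb(\clh_2)$ commutes with $T\otimes I_{\clh_2}$, the subspace $\clm$ is invariant under all such operators. Writing each $w\in\clm$ as $w=\sum_\alpha x_\alpha(w)\otimes e_\alpha$, I would set $\cls=\overline{\operatorname{span}}\{x_\alpha(w):w\in\clm,\ \alpha\}$; applying the rank-one operators $I_{\clh_1}\otimes(e_\beta^*\,\cdot\,e_\gamma)$ to elements of $\clm$ extracts each coordinate $x_\beta(w)\otimes e_\gamma$ as an element of $\clm$, and then closedness plus linear combination yields $\cls\otimes\clh_2\subseteq\clm$; the reverse inclusion is immediate from the definition of $\cls$.

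The final step is to verify $\cls$ is hyperinvariant for $T$. Given $R\in\{T\}'$, the operator $R\otimes I_{\clh_2}$ commutes with $T\otimes I_{\clh_2}$, so by hypothesis $(R\otimes I_{\clh_2})(\cls\otimes\clh_2)\subseteq\cls\otimes\clh_2$; picking any nonzero $v\in\clh_2$ and comparing coefficients in $Rs\otimes v=\sum_\alpha v_\alpha(Rs)\otimes e_\alpha$ forces $Rs\in\cls$. I expect the main (and really the only) subtle point is the rigorous argument that invariance under $I_{\clh_1}\otimes\clb(\clh_2)$ forces the tensor-product form $\clm=\cls\otimes\clh_2$; once this structural decomposition is in hand, both implications reduce to routine bookkeeping with tensor-product expansions.
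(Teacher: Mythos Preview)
The paper does not actually supply a proof of this lemma: it is quoted verbatim from Douglas and Pearcy \cite{douglastopology} and invoked as a black box. Your argument is correct and is essentially the standard one. The two key structural observations you isolate---that $I_{\clh_1}\otimes\clb(\clh_2)$ lies in the commutant of $T\otimes I_{\clh_2}$ (forcing the tensor splitting $\clm=\cls\otimes\clh_2$ via rank-one slices), and that every slice $R_{y_1,y_2}$ of an arbitrary commutant element lands in $\{T\}'$ (forcing $\cls$ to be hyperinvariant)---are exactly what drives the Douglas--Pearcy proof as well. One small stylistic remark: in the sufficiency direction your expansion $R(s\otimes y)=\sum_\alpha R_{y,e_\alpha}(s)\otimes e_\alpha$ is justified because it is nothing more than the orthogonal decomposition of $R(s\otimes y)$ along the second tensor factor, and membership in the closed subspace $\cls\otimes\clh_2$ follows since that subspace consists precisely of those vectors whose $\clh_1$-coefficients all lie in $\cls$. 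You flagged this as the ``only subtle point'' in the other direction, and you handled it correctly there too.
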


The following lemma was given by Wu in  \cite{wuhyperinvariant} for hyperinvariant subspaces of direct sum of operators.

\begin{lem}\label{thm-hypinv-directsum}
Let $\mathcal{H}_1$ and $\mathcal{H}_2$ be two Hilbert spaces and $T_1  \in \mathcal{B}(\clh_1)$ and $T_2 \in \mathcal{B}(\clh_2)$. Then $\clm \subseteq \mathcal{H}_1 \oplus \mathcal{H}_2$ is hyperinvariant for $T_1 \oplus T_2$ if and only if $\clm=\clm_1 \oplus \clm_2$, where $\clm_1 \subseteq \clh_1$ and $\clm_2 \subseteq \clh_2$ are hyperinvariant for $T_1$ and $T_2$, respectively and $S_1\clm_1 \subseteq \clm_2$ and $S_2\clm_2 \subseteq \clm_1$ for any operators $S_1 \in \mathcal{B}(\clh_1, \clh_2)$ and $S_2 \in \mathcal{B}(\clh_2, \clh_1)$ satisfying $S_1T_1 = T_2S_1$ and $S_2T_2 = T_1S_2$.
\end{lem}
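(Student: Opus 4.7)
The plan is to exploit the fact that operators on $\clh_1 \oplus \clh_2$ commuting with $T_1 \oplus T_2$ can be represented as $2 \times 2$ block matrices
\[
X = \begin{pmatrix} A_1 & S_2 \\ S_1 & A_2 \end{pmatrix}
\]
for which the commutation relation $X(T_1 \oplus T_2) = (T_1 \oplus T_2) X$ translates, block by block, into $A_i T_i = T_i A_i$ for $i = 1, 2$ and $S_1 T_1 = T_2 S_1$, $S_2 T_2 = T_1 S_2$. Thus the commutant of $T_1 \oplus T_2$ is generated (in the obvious sense) by diagonal blocks $A_1 \oplus A_2$ with $A_i$ commuting with $T_i$, together with the off-diagonal intertwiners $S_1$ and $S_2$. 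Once this is understood, both directions of the lemma become essentially bookkeeping.

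For the forward direction, I would start by observing that the coordinate projection $P_1$ onto $\clh_1 \oplus \{0\}$ commutes with $T_1 \oplus T_2$ (the diagonal block structure makes this immediate). Hyperinvariance of $\clm$ then forces $P_1 \clm \subseteq \clm$, hence $\clm = \clm_1 \oplus \clm_2$ with $\clm_i := P_i \clm \subseteq \clm$. To upgrade each $\clm_i$ to a hyperinvariant subspace for $T_i$, pick any $A_1 \in \clb(\clh_1)$ with $A_1 T_1 = T_1 A_1$; then $A_1 \oplus I_{\clh_2}$ commutes with $T_1 \oplus T_2$, so applying it to $(m_1, 0) \in \clm$ gives $A_1 m_1 \in \clm_1$. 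The analogous argument works for $\clm_2$. Finally, to obtain the cross-conditions, take any $S_1 \in \clb(\clh_1, \clh_2)$ with $S_1 T_1 = T_2 S_1$ and consider the block operator
\[
\widetilde{S}_1 = \begin{pmatrix} 0 & 0 \\ S_1 & 0 \end{pmatrix},
\]
which, by a direct block computation, commutes with $T_1 \oplus T_2$ precisely because of the intertwining relation; applying $\widetilde{S}_1$ to $(m_1, 0) \in \clm$ produces $(0, S_1 m_1) \in \clm$, so $S_1 \clm_1 \subseteq \clm_2$. Symmetrically $S_2 \clm_2 \subseteq \clm_1$.

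For the converse, suppose $\clm = \clm_1 \oplus \clm_2$ satisfies the hypotheses, and take any $X$ commuting with $T_1 \oplus T_2$. Write $X$ in block form as above; the commutation relation decomposes into the four equations noted in the first paragraph, so each diagonal entry $A_i$ commutes with $T_i$ (hence leaves $\clm_i$ invariant by hyperinvariance) while each off-diagonal entry satisfies the intertwining relation (so maps $\clm_1$ into $\clm_2$ and $\clm_2$ into $\clm_1$ by hypothesis). Then for $(m_1, m_2) \in \clm$,
\[
X(m_1, m_2) = (A_1 m_1 + S_2 m_2, \; S_1 m_1 + A_2 m_2) \in \clm_1 \oplus \clm_2 = \clm.
\]

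There is no real obstacle here; the only thing that requires care is the block-matrix commutation computation used to produce the specific witnesses $A_1 \oplus I_{\clh_2}$, $I_{\clh_1} \oplus A_2$, $\widetilde{S}_1$, and $\widetilde{S}_2$ in the forward direction. The entire proof ultimately rests on the observation that, for a direct-sum operator, the commutant is a $2 \times 2$ matrix algebra built from the individual commutants and the intertwining spaces $\{S : S T_1 = T_2 S\}$ and $\{S : S T_2 = T_1 S\}$.
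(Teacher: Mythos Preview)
Your argument is correct and is the standard block-matrix proof. Note, however, that the paper does not supply its own proof of this lemma: it merely states the result and attributes it to Wu. Your approach---splitting via the coordinate projections (which lie in the commutant), then testing against diagonal blocks $A_1\oplus I$, $I\oplus A_2$ and the off-diagonal intertwiners $\widetilde{S}_1$, $\widetilde{S}_2$---is exactly the expected one and matches what appears in Wu's original paper.
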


The following result is of independent interest and useful in the sequel. For the sake of completeness, we write the detailed proof.

\begin{lem}\label{lem-innerfunction}
Let $u$ be a scalar-valued inner function. Then $u$ is a polynomial if and only if $u$ is a monomial. 
\end{lem}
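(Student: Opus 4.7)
The forward direction is immediate: every monomial $u(z) = cz^n$ with $|c|=1$ is trivially a polynomial and inner. The content is in the converse, so the plan is to show that any polynomial $u$ which is inner must be of the form $u(z) = cz^n$.

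The approach rests on the standard reflection trick. Writing $u(z) = a_0 + a_1 z + \cdots + a_n z^n$ with $a_n \neq 0$, I first observe that since $u$ is continuous on $\overline{\D}$ and $|u(e^{it})|=1$ for a.e.\ $t$, in fact $|u(z)| = 1$ for every $z \in \T$. I introduce the reflected polynomial
\[
u^*(z) := z^n\,\overline{u(1/\bar z)} = \overline{a_n} + \overline{a_{n-1}} z + \cdots + \overline{a_0}\, z^n,
\]
which is a polynomial of degree at most $n$. On $\T$ one has $\bar z = 1/z$, so a direct computation gives $\overline{u(z)} = z^{-n} u^*(z)$. Multiplying by $u(z)$ and using $u(z)\overline{u(z)} = |u(z)|^2 = 1$ on $\T$ then yields the identity
\[
u(z)\, u^*(z) = z^n \qquad (z \in \T),
\]
and since both sides are polynomials agreeing on an infinite set, this identity holds for every $z \in \C$.

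The last step is purely algebraic: since $u(z)$ divides $z^n$ in $\C[z]$ and $z$ is prime, unique factorization forces $u(z) = c\, z^k$ for some $0 \le k \le n$ and some $c \in \C^{\times}$. The inner condition $|u(e^{it})|=1$ then forces $|c|=1$, so $u$ is a monomial, as desired. I do not foresee any real obstacle; the argument relies only on the reflection identity and unique factorization in $\C[z]$, and does not require the more general Blaschke factorization of bounded inner functions.
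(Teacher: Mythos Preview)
Your argument is correct and in fact cleaner than the paper's. The paper proceeds by induction on the degree of $u$: expanding $\overline{u(e^{it})}u(e^{it})$ and reading off the coefficient of $e^{int}$ gives $\overline{u_0}u_n=0$, which forces either $u_0=0$ (so $u=zv$ with $v$ inner of lower degree) or $u_n=0$ (so $u$ already has lower degree), and the induction hypothesis finishes. Your route avoids induction altogether: the reflection identity $u(z)\,u^*(z)=z^n$ reduces the problem to unique factorization in $\C[z]$, yielding the monomial form in one stroke. The paper's argument is slightly more elementary in that it uses only Fourier coefficient matching, while yours buys brevity by invoking the algebraic structure of $\C[z]$; both are perfectly valid for this statement.
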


\begin{proof}
The sufficiency of this condition is trivial. We use mathematical induction to prove the necessity part. 

Let $u$ be a polynomial of degree $1$. 
Then $u(z)=u_0+u_1z$, $u_0,u_1 \in \C$. Since $u$ is inner, $\overline{u(e^{it} )}u(e^{it})=1$ a.e. on $\T$. Consider
\[
\overline{u(e^{it} )}u(e^{it})=\vert u_0\vert^2 +\vert u_1 \vert^2 
+ \overline{u_0}u_1e^{it} + \overline{u_1}u_0e^{-it}=1
\]
if and only if $\overline{u_0}u_1=0$. Hence either $u_0=0$ or $u_1= 0$ which implies either $u(z)=u_0,\,\, \vert u_0\vert=1$ or $u(z)=u_1z,\,\,\vert u_1\vert=1$. Therefore, $u$ is a monomial.

Now assume that if an inner function $u$  is a polynomial of degree $n-1$, then it is a monomial. Suppose that $u(z)=\sum\limits_{m=0}^n u_mz^m$ is a polynomial of degree $n$ and also inner. Then
\[
\overline{u(e^{it})}u(e^{it})=\sum\limits_{m=0}^n \sum\limits_{k=0}^n \overline{u_m}u_k e^{i(k-m)t}.
\]
Since $u$ is inner, all the coefficients except the constant term are zero. Now on comparing the coefficient of $e^{int}$, we get $\overline{u_0}u_n=0$. Hence either $u_0=0$ or $u_n=0$. If $u_n=0$, then $u$ is a polynomial of degree  $n-1$ and therefore, by induction hypothesis, it is a monomial of degree at most $n-1$. If $u_n\neq 0$, then $u_0=0$. Thus $u = zv$, where $v$ is an inner function of degree
at most $n-1$. Again by induction hypothesis, $v$ is a monomial of degree at most $n-1$ and hence $u$ is a monomial of degree at most $n$.
\end{proof}

The following result may be well-known to the experts, but for the sake of being complete, we give an independent proof based on the Sz.-Nagy and Foia\c{s} dilation.

\begin{lem}\label{thm-hypinvtruncated}
The only hyperinvariant subspaces of a truncated shift $J_k$ on $\cle \otimes \C^k$ of index $k$ are $\cln(J_k^n),\, n=0,1,\ldots,k$.
\end{lem}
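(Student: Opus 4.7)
The plan is to prove the two containments separately. The easy direction is that each $\cln(J_k^n)$, for $n = 0, 1, \ldots, k$, is hyperinvariant: for any operator $X$ commuting with $J_k$ and any $v \in \cln(J_k^n)$, one has $J_k^n X v = X J_k^n v = 0$, so $X v \in \cln(J_k^n)$. Thus only the converse requires real work.

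For the converse, my first step is to describe the commutant of $J_k$ via the Sz.-Nagy and Foia\c{s} commutant lifting theorem (Theorem~2.5). Identify $\cle \otimes \C^k$ with $\clq := [z^k H^2_\cle(\D)]^\perp$ by $(a_0, \ldots, a_{k-1}) \mapsto \sum_{m=0}^{k-1} a_m z^m$, so that $M_z^\cle$ is the minimal isometric dilation of $J_k$ and $J_k = P_\clq M_z^\cle|_\clq$. Commutant lifting yields that every $X$ commuting with $J_k$ admits a lift $Y = T_\Phi$ commuting with $M_z^\cle$, for some $\Phi \in H^\infty_{\clb(\cle)}$, and hence $X = P_\clq T_\Phi|_\clq$. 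The family I will exploit arises from the constant symbols $\Phi \equiv B$, $B \in \clb(\cle)$: since $T_B$ preserves $\clq$, the operator $X_B := T_B|_\clq$ commutes with $J_k$ and acts coordinate-wise as $(a_0, \ldots, a_{k-1}) \mapsto (Ba_0, \ldots, Ba_{k-1})$.

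Now let $\clm \subseteq \cle \otimes \C^k$ be a nonzero hyperinvariant subspace. For a nonzero $v = (v_0, \ldots, v_{k-1})$, write $j_0(v) = \min\{j : v_j \neq 0\}$, and set $n := \max\{k - j_0(v) : v \in \clm,\ v \neq 0\} \in \{1, \ldots, k\}$. By construction $\clm \subseteq \cln(J_k^n)$. To prove the reverse inclusion, pick $v \in \clm$ realizing the maximum, so $v = (0, \ldots, 0, v_{k-n}, v_{k-n+1}, \ldots, v_{k-1})$ with $v_{k-n} \neq 0$. I claim that $\cln(J_k^j) \subseteq \clm$ for each $j = 1, \ldots, n$, by induction on $j$. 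At the base $j = 1$, the vector $J_k^{n-1} v = (0, \ldots, 0, v_{k-n})$ lies in $\clm$, and applying $X_B$ while varying $B$ (using surjectivity of $B \mapsto B v_{k-n}$ onto $\cle$, guaranteed by $v_{k-n} \neq 0$) yields $(0, \ldots, 0, a) \in \clm$ for every $a \in \cle$. At the inductive step, $J_k^{n-j} v \in \clm$ has its first nonzero coordinate equal to $v_{k-n}$ at position $k-j$; applying $X_B$ prescribes that leading entry freely, and the inductive hypothesis $\cln(J_k^{j-1}) \subseteq \clm$ allows free adjustment of the remaining $j-1$ coordinates, yielding every element of $\cln(J_k^j)$.

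The conceptual crux is the commutant lifting step, which instantly delivers both the coordinate-wise multiplications $X_B$ and the powers $J_k^j$ inside $\{J_k\}'$; once these two families are in hand, the inductive filling-in of $\cln(J_k^n)$ is essentially routine bookkeeping.
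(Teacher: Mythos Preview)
Your proof is correct, but it takes a different route from the paper. The paper first writes $J_k = I_\cle \otimes S$ and invokes Lemma~\ref{Lemma1} (Douglas--Pearcy) to reduce to the scalar shift $S$ on $\C^k$; it then dilates $S$ to $M_z$ on $H^2(\D)$, applies Beurling's theorem to obtain an inner $u$ with $z^k = uv$, and uses Lemma~\ref{lem-innerfunction} (an inner polynomial is a monomial) to conclude $u = c\,z^n$, whence $\clm = \cln(J_k^{k-n})$. Your argument instead works directly in the vector-valued setting: from any nonzero hyperinvariant $\clm$ you extract an element with earliest leading nonzero coordinate, and then use the commuting operators $J_k^j$ and the coordinate-wise multiplications $X_B$ to fill in $\cln(J_k^n)$ inductively. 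This is more elementary than the paper's function-theoretic route, avoiding Beurling's theorem and the monomial lemma entirely. One minor remark: the commutant lifting theorem is overkill for your purposes---that $X_B$ and $J_k^j$ commute with $J_k$ is a one-line direct computation, so the ``conceptual crux'' is really just identifying these specific operators rather than lifting in general. What you lose compared to the paper is the structural picture tying hyperinvariance to inner divisors of $z^k$, but what you gain is a self-contained argument that needs no external machinery.
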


\begin{proof}
It is trivial to check that, for each $n \in \{0,1,\ldots, k\}$, $\cln(J_k^n)$ is hyperinvariant under $J_k$. 

For the converse,  note that $J_k$ can be identified as $I_{\cle} \otimes S$, 
where $S$ is the right shift operator on $\C^k$. So, by Lemma \ref{Lemma1}, a hyperinvariant subspace of $J_k$ is of the form $\E \otimes \clm$, where $\clm \subseteq \C^k$  is a hyperinvariant subspace for $S$.
Define $\Pi: \C^k \rightarrow [z^kH^2(\D)]^\perp$ as 
\[
\Pi(a_0,a_1,\ldots, a_{k-1})=\sum\limits _{m=0}^{k-1}a_mz^m.
\]
Clearly, $\Pi$ is a unitary map and hence $\C^k$ can be identified as $\mathcal{H}=[z^kH^2(\D)]^\perp = H^2(\D) \ominus z^kH^2(\D)$. Since $M_z$ on $H^2(\D)$ is the minimal isometric dilation of $S$, we write $S^*=M_z^*|_{\mathcal{H}}$, where $\clh$ is $M_z^*$ invariant subspace. Suppose $\clm$ is hyperinvariant for $S$. Then $S\clm \subseteq \clm$, that is, $S^*(\mathcal{H}\ominus \clm)\subseteq \mathcal{H} \ominus \clm$ which implies $M_z^*(\mathcal{H}\ominus \clm)\subseteq \mathcal{H} \ominus \clm$. Now by Beurling's theorem, there exists an inner function $u \in H^\infty$ such that 
\[
\mathcal{H}\ominus \clm=[u\h]^\perp.
\]
Therefore, 
\[
\clm=u\h \cap \mathcal{H}=u\h \ominus z^k\h.
\] 
Now $[u\h]^\perp \subseteq \mathcal{H}$ yields $z^k\h\subseteq u\h$. Thus there exists an inner function $v \in H^\infty$ such that $z^k=uv$. Therefore, 
\[
M_{z^k}=T_uT_{v} \Rightarrow T_u^*M_{z^k}=T_{v}. 
\]
Hence $\bar{u}z^k=v \in H^\infty$ which implies $u=\sum\limits_{m=0}^{k}u_mz^m$, where $u_m \in \C$ for all $0 \leq m\leq k$. Since $u$ is inner, using Lemma \ref{lem-innerfunction}, we get $u=u_nz^n$ for some $0\leq n \leq k$, where $\vert u_n\vert=1$. Thus
\[
\clm= z^n\h\cap \clh=\clr ( S^n)=\cln( S^{k-n}).
\]
Hence, any hyperinvariant subspace of $J_k$ is of the form $\E \otimes \cln (S^{n})=\cln (J_k^n)$ for some $n=0,1,\ldots,k$. 
\end{proof}

The next goal is to characterize the hyperinvariant subspaces of the direct sum of infinite truncated shifts and the above theorem will be useful. However, the technique is motivated by the above Lemma \ref{thm-hypinv-directsum}. Before proceeding to the proof of the next result, we need the following observation.

Recall that $\mathbb{E}_k = \cle_k \otimes \C^k$ for $k \geq 1$ and let $S \in \clb\left(\bigoplus\limits_{k=1}^\infty \mathbb{E}_k\right)$ such that
\begin{equation}\label{eq2}
S\left(\bigoplus\limits_{k=1}^\infty J_k\right)=\left(\bigoplus\limits_{k=1}^\infty J_k\right)S.
\end{equation}
Let $[S_{ij}]$ be the block matrix representation of $S$, where $S_{ij} \in \clb(\mathbb{E}_j,\mathbb{E}_i)$. Then (\ref{eq2}) holds if and only if
\begin{equation}\label{eq3}
J_i S_{ij}=S_{ij}J_j \quad (\forall~~ i, j \geq 1).
\end{equation}
Now write $S_{ij}$ as a block matrix of order $i \times j$; then it is not difficult to see by (\ref{eq3}) that whenever $i \leq j$,
$S_{ij}=\begin{bmatrix}
X_{i \times i}&O_{i \times (j-i)}
\end{bmatrix}$. 
Similarly, for $i>j$, $S_{ij}=\begin{pmatrix}
O_{(i-j)\times j}\\
Y_{j \times j}
\end{pmatrix}$, 
where $X$ and $Y$ are lower triangular block Toeplitz matrices whose entries are operators from $\cle_j$ to $\cle_i$. Also note that, for any $j\geq 1$, 
\[
\cln(J_j^{n_j})=\{({0,\ldots,0}, {a_1},a_2,\ldots,a_{n_j}): a_n \in \cle_j, 1 \leq n \leq n_j\}.
\]
Hence, it is clear that the first nonzero entry of any element of $\cln(J_j^{n_j})$ is at $(j-n_j+1)^{th}$ position or in the right of it. We frequently use this fact in the proof of the following result.

\begin{lem}\label{lemma-hyperinv-infdirect}
For each $k\geq 1$, let $J_k$ be the truncated shift of index $k$ on $\mathbb{E}_k=\cle_k \otimes \C^k$. Then the only hyperinvariant subspaces of $\bigoplus\limits_{k=1}^\infty J_k \in \clb\left(\bigoplus\limits_{k=1}^\infty \mathbb{E}_k \right)$ are $\bigoplus\limits_{k=1}^\infty \cln(J_k^{n_k })$, where $n_k  \in \{0,1,\ldots,k\}$ and $n_k  \leq n_{k+1} \leq n_k +1$.
\end{lem}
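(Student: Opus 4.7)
The strategy is to first decompose any hyperinvariant subspace $\clm \subseteq \bigoplus_k \mathbb{E}_k$ along the coordinate summands (reducing to Lemma~\ref{thm-hypinvtruncated}), and then to force the adjacency condition $n_k \leq n_{k+1} \leq n_k + 1$ by testing against specific cross-intertwiners that commute with $T := \bigoplus_{k=1}^\infty J_k$. Each coordinate projection $P_{\mathbb{E}_k}$ lies in $\{T\}'$, so $P_{\mathbb{E}_k}\clm \subseteq \clm$ yields $\clm = \bigoplus_{k=1}^\infty \clm_k$ with $\clm_k := \clm \cap \mathbb{E}_k$. Moreover, any $T_k \in \{J_k\}'$ extends to an element of $\{T\}'$ by acting as $T_k$ on $\mathbb{E}_k$ and as $0$ on every other summand; hence each $\clm_k$ is hyperinvariant under $J_k$, and Lemma~\ref{thm-hypinvtruncated} gives $\clm_k = \cln(J_k^{n_k})$ for some $n_k \in \{0, 1, \ldots, k\}$.

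To force the adjacency constraint, assume all $\cle_k$ are nonzero (the degenerate case being trivial), fix nonzero $A \in \clb(\cle_k, \cle_{k+1})$ and $B \in \clb(\cle_{k+1}, \cle_k)$, and set
\[
S_1(a_1, \ldots, a_k) := (0, Aa_1, \ldots, Aa_k), \qquad S_2(b_1, \ldots, b_{k+1}) := (Bb_1, \ldots, Bb_k).
\]
A direct computation gives $S_1 J_k = J_{k+1} S_1$ and $S_2 J_{k+1} = J_k S_2$, so zero-padding each across the remaining summands produces operators in $\{T\}'$. Applying $S_1$ to $\cln(J_k^{n_k})$ yields vectors in $\mathbb{E}_{k+1}$ with $(k+1) - n_k$ leading zeros, i.e., elements of $\cln(J_{k+1}^{n_k})$, forcing $n_{k+1} \geq n_k$. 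Applying $S_2$ to $\cln(J_{k+1}^{n_{k+1}})$ (with $n_{k+1} \geq 1$) yields nonzero vectors in $\mathbb{E}_k$ with $(k+1) - n_{k+1}$ leading zeros, i.e., elements of $\cln(J_k^{n_{k+1}-1})$, forcing $n_{k+1} - 1 \leq n_k$.

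For the converse, assume $n_k \leq n_{k+1} \leq n_k + 1$ for all $k$, and let $S = [S_{ij}] \in \{T\}'$. By the structural description preceding the lemma, when $i \leq j$ only the leftmost $i$ columns of $S_{ij}$ are nonzero and form an $i \times i$ lower-triangular block Toeplitz matrix, and when $i > j$ only the bottom $j$ rows are nonzero and form a $j \times j$ lower-triangular block Toeplitz matrix. A direct calculation then shows that for $x \in \cln(J_j^{n_j})$ (whose first $j - n_j$ entries vanish), the image $S_{ij} x$ has first $\min(i, j - n_j)$ entries zero when $i \leq j$ and first $i - n_j$ entries zero when $i > j$; equivalently, $S_{ij} x$ lies in $\cln(J_i^{\max(0, n_j - (j-i))})$ or $\cln(J_i^{n_j})$ respectively. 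By telescoping, the adjacency hypothesis gives $n_j - n_i \leq j - i$ whenever $i \leq j$ and $n_j \leq n_i$ whenever $j \leq i$, which is exactly what is needed to obtain $S_{ij}(\cln(J_j^{n_j})) \subseteq \cln(J_i^{n_i})$ in both regimes. Hence $S$ preserves $\bigoplus_k \cln(J_k^{n_k})$, completing the sufficiency argument. The main obstacle I anticipate is precisely this case analysis: one must carefully relate the count of leading zeros of $S_{ij} x$ to the Toeplitz-plus-padding block structure of $S_{ij}$ in both the $i \leq j$ and $i > j$ regimes and match these against the target nullspace levels $n_i$.
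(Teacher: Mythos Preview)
Your proof is correct and follows essentially the same approach as the paper's: decompose via coordinate projections, apply Lemma~\ref{thm-hypinvtruncated}, and test against cross-intertwiners built from the block-Toeplitz structure described before the lemma. The only minor difference is that for the forward direction you test only adjacent intertwiners between $\mathbb{E}_k$ and $\mathbb{E}_{k+1}$ and then telescope the adjacency inequalities for the converse, whereas the paper tests general $S_{ij}$ for all $i,j$ in the forward direction (obtaining $n_j\le n_i$ for $i>j$ and $n_j-n_i\le j-i$ for $i\le j$ directly) and dispatches the converse in one line.
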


\begin{proof}
Let $R=\bigoplus\limits_{k=1}^\infty J_k$ on $\mathbb{E}=\bigoplus\limits_{k=1}^\infty \mathbb{E}_k $ and $\clm$ be a hyperinvariant subspace of $R$. For each $j \geq 1$, let $I_j$ be the identity operator on $\mathbb{E}_j $.

Define an operator $I_j'$ on 
$\mathbb{E}$ as
\[
I_j'=\bigoplus\limits_{k=1}^\infty \Delta_k \textrm{, where } \Delta_k= 
\begin{cases}
I_j,& k=j\\
0, & k \neq j
\end{cases}.
\]
Then $I_j' R=RI_j'$. Therefore, $\clm$ is invariant under $I_j'$, that is, $P_j (\clm) \subseteq \clm$ where $P_j$ is the orthogonal projection of $\mathbb{E}$ onto $\mathbb{E}_j $. Thus $\clm=\bigoplus\limits_{k=1}^\infty P_k(\clm)$. Note that, for each $j \geq 1$,
\[
J_jP_j(\clm)=RI_j'(\clm)=I_j'R(\clm) \subseteq I_j'(\clm)=P_j(\clm). 
\]
Clearly, $ P_j(\clm) \subseteq \mathbb{E}_j $ is hyperinvariant for $J_j$. Therefore, any hyperinvariant subspace  $\clm$ of $R$ must be of the form $\bigoplus\limits_{k=1}^\infty \clm_k$, where $\clm_k$ is hyperinvariant for $J_k$. Now using Lemma \ref{thm-hypinvtruncated}, $\clm=\bigoplus\limits_{k=1}^\infty \cln (J_k^{n_k })$ where $n_k  \in\{0,1, \ldots,k\}$. Being $\clm$ hyperinvariant, for any operator $S$ commuting with $R$, we have $S(\clm) \subseteq \clm$. But from the remarks before this lemma, it suffices to consider such $S_{ij}$'s for which (\ref{eq3}) holds.

\textbf{Case-1:} For $i> j$, define an operator $S_{ij}: \mathbb{E}_j \rightarrow \mathbb{E}_i  $ in the following way:
\[
S_{ij}=
\begin{pmatrix}
& & & & \\
&  & & & \\
& & {\Huge O}_{(i-j) \times j} & &\\
& & & &\\
\theta_{1}& 0& \cdots &\cdots&0\\
\theta_2 &\theta_1& 0& \cdots &0\\
\theta_3&\theta_2&\theta_1 & \cdots &0\\
\vdots&\vdots&\cdots&\ddots&\vdots\\
\theta_j& \theta_{j-1}& \cdots&\cdots&\theta_{1}
\end{pmatrix}_{i \times j},
\]
where ${\theta_m}\in \mathcal{B}(\cle_j,\cle_i)$ for $1 \leq m\leq j $. Then $J_iS_{ij}=S_{ij}J_j$. Now define  block operator $S_{ij}'$ on  $\mathbb{E}$ in such a way that its $ij^{th}$ entry is $S_{ij}$ and others are zero. Then $RS_{ij}'=S_{ij}'R$. Therefore,
\[
S_{ij}'(\clm)\subseteq \clm \Leftrightarrow S_{ij}( \cln (J_j^{n_j})) \subseteq \cln (J_i^{n_i}). 
\]
If $n_j=0$, then it is trivially true. If $n_j \neq 0$, then  let $(0,\ldots,0, \bm{a_1},a_2,\ldots,a_{n_j}) \in \cln (J_j^{n_j})$, where $a_n \in \cle_j$ for all $ 1 \leq n\leq n_j$, $a_1 \neq 0$ and the boldface shows the $(j-n_j+1)^{th}$ position. Then  
\begin{equation}
S_{ij}(0,\ldots,0, \bm{a_1},a_2,\ldots,a_{n_j}) \in \cln (J_i^{n_i}) \Leftrightarrow \left(0, \ldots,0, \bm{\theta_1( a_1)}, \ldots, \sum\limits_{m=1}^{n_j}\theta_{n_j-m+1}(a_m)\right) \in  \cln (J_i^{n_i}) \label{eq4}
\end{equation}
where boldface in the latter inclusion shows the $(i-n_j+1)^{th}$ position. But this is possible for all $\theta_1 \in \clb(\cle_j, \cle_i)$ if and only if $n_j \leq n_i$. Here we used  the fact that $\clx \ni x=0$  if and only if $Ax=0$ for all $ A \in \clb(\clx,\cly)$, where $\clx$ and $\cly$ are Banach spaces.

Note that if $n_i=0$, then (\ref{eq4})  will imply
\[
(0, \ldots,0, \bm{\theta_1 (a_1)}, \ldots, \sum\limits_{m=1}^{n_j}\theta_{n_j-m+1}(a_m))=0,
\]
for all $\theta_1 \in \clb(\cle_j, \cle_i)$  which is not  possible as $a_1 \neq 0$. More precisely, if $n_i=0$, then $n_j=0$. So, 
$n_j \leq n_i$ still holds. 

\textbf{Case-2:} Let $i \leq j$. 
Define $S_{ij}: \mathbb{E}_j\rightarrow \mathbb{E}_i$ in the following way:
\[
S_{ij}=\begin{pmatrix}
\theta_1 &0&0&\cdots&\cdots&0\\
\theta_2 &\theta_1&0&\cdots&\cdots&0\\
\vdots&\vdots&\ddots&\cdots&\cdots&\vdots\\
\theta_i&\theta_{i-1}&\cdots&\theta_1&\cdots&0
\end{pmatrix}_{i\times j},
\]
where $\theta_m\in \mathcal{B}(\cle_j,\cle_i)$ for all $1 \leq m \leq i$. Then $J_iS_{ij}=S_{ij}J_j$. Now define $S_{ij}'$ on  $\mathbb{E}$ as above. Then again, $S_{ij}(\cln (J_j^{n_j}))\subseteq \cln (J_i^{n_i})$. If $n_j=0$, then by Case-1, $n_i=0$. For $ n_j \neq 0$, let $(0,\ldots,0, \bm{a_1},a_2,\ldots,a_{n_j}) \in \cln (J_j^{n_j})$, where $a_n \in \cle_j$ for all $1\leq n\leq n_j$, $a_1\neq 0$ and  boldface shows the $(j-n_j+1)^{th}$ position. If $j-n_j+1 \leq i$, let $l=n_j-j+i$, then 
\[
S_{ij}(0,\ldots,0, \bm{a_1},a_2,\ldots,a_{n_j}) \in \cln (J_i^{n_i})\Leftrightarrow (0, \ldots,0, \bm{\theta_1 (a_1)}, \ldots, \sum\limits_{m=1}^{l}\theta_{l-m+1}(a_m)) \in  \cln (J_i^{n_i}).
\]
where the boldface in the latter inclusion is at the $(j-n_j+1)^{th}$ position.
Again this is possible for all $\theta_1 \in \clb(\cle_j,\cle_i)$ if and only if $j-n_j+1 \geq i-n_i+1$.

On the other hand, $j-n_j+1>i$ if and only if $S_{ij}(0,\ldots,0, \bm{a_1},a_2,\ldots,a_{n_j})=0$ for all $a_n \in \cle_j \,\,(1\leq n\leq n_j)$ and for all $S_{ij}$ defined above. In that case also, $j-n_j+1 \geq i-n_i+1.$  In particular, let $j=i+1$, we get $n_{i+1} \leq n_i+1$. Also from the former case, $n_i \leq n_{i+1}$. 

Hence any hyperinvariant subspace $\clm$  of $R$ is of the form 
$\clm=\bigoplus\limits_{k=1}^\infty \cln (J_k^{n_k})$, where $n_k \in\{0,1,\ldots,k\}$ and $n_k  \leq n_{k+1} \leq n_k +1$. 

The converse holds from the above proof and the observation before this lemma.
\end{proof}

Let $T$ be a c.n.u.\ power partial isometry. We will now characterize hyperinvariant subspaces of a pure power partial isometry, i.e., there is no co-isometric part. We will then use it to characterize hyperinvariant subspaces of a c.n.u.\ power partial isometry. Before that, first recall the following result.

\begin{lem}[cf. \cite{douglashyperinvariant}]\label{hyper-shift}
A non-trivial closed subspace of $H^2_\cle(\D)$ is hyperinvariant for $M_z$ if and only if it is of the form $uH^2_\cle(\D)$ for some scalar-valued inner function $u$.
\end{lem}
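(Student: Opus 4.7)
The plan is to prove both implications, with the reverse being immediate and the forward relying on a tensor decomposition argument.

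For the reverse direction, the commutant of $M_z^\cle$ on $H^2_\cle(\D)$ consists of the analytic Toeplitz operators $\{T_\Psi : \Psi \in H^\infty_{\clb(\cle)}\}$. For any scalar inner $u$ and any $f \in H^2_\cle(\D)$, scalar multiplication commutes with every operator-valued multiplier, so
\[
T_\Psi(uf) = \Psi u f = u(\Psi f) \in uH^2_\cle(\D),
\]
showing that $uH^2_\cle(\D)$ is invariant under the full commutant.

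For the forward direction, let $\clm$ be a non-trivial hyperinvariant subspace. The key observation is that every constant operator $A \in \clb(\cle)$ yields a Toeplitz operator $T_A$ commuting with $M_z^\cle$. Applying this to both $A$ and $A^*$ shows that $\clm$ and $\clm^\perp$ are both invariant under every constant in $\clb(\cle)$, while $\clm^\perp$ is also $(M_z^\cle)^*$-invariant (since $\clm$ is $M_z^\cle$-invariant). Identifying $H^2_\cle(\D)$ with $H^2 \otimes \cle$ (so that $M_z^\cle = M_z \otimes I_\cle$ and constants act as $I \otimes A$), I would then establish the structural lemma: any closed subspace $\clk \subseteq H^2 \otimes \cle$ invariant under $I \otimes \clb(\cle)$ is of the form $M \otimes \cle$ for some closed $M \subseteq H^2$. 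Fixing an ONB $\{e_i\}$ of $\cle$, for $f = \sum_i \phi_i \otimes e_i \in \clk$, applying the rank-one constants $I \otimes (v \otimes e_i^*)$ forces $\phi_i \otimes v \in \clk$ for every $v \in \cle$; setting $M = \{\phi \in H^2 : \phi \otimes v \in \clk \text{ for all } v \in \cle\}$ then gives every $\phi_i \in M$, whence $\clk = M \otimes \cle$.

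Applying this lemma with $\clk = \clm^\perp$ yields $\clm^\perp = M \otimes \cle$ with $M$ closed in $H^2$. The $(M_z^\cle)^*$-invariance of $\clm^\perp$ transfers to $M_z^*$-invariance of $M$ in $H^2$, and non-triviality of $\clm$ translates to $M$ being a proper non-zero subspace of $H^2$. Beurling's theorem then produces $M^\perp = uH^2$ for some (necessarily non-constant) scalar inner $u$, so that $\clm^\perp = (H^2 \ominus uH^2) \otimes \cle = H^2_\cle(\D) \ominus uH^2_\cle(\D)$ and therefore $\clm = uH^2_\cle(\D)$. The main obstacle is the tensor decomposition step, particularly justifying $\clk = M \otimes \cle$ in the closure-theoretic sense when $\cle$ is infinite-dimensional; once that is secured, Beurling's theorem delivers the conclusion.
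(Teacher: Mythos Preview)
Your argument is correct. The paper does not actually prove this lemma; it is quoted from Douglas \cite{douglashyperinvariant} without proof, so there is no in-paper argument to compare against directly.

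That said, it is worth pointing out that your tensor-decomposition step (``any closed $\clk\subseteq H^2\otimes\cle$ invariant under $I\otimes\clb(\cle)$ has the form $M\otimes\cle$'') is precisely the content of the Douglas--Pearcy result recorded earlier in the paper as Lemma~\ref{Lemma1}: since $M_z^\cle = M_z\otimes I_\cle$, that lemma gives at once that every hyperinvariant subspace of $M_z^\cle$ is $\cls\otimes\cle$ with $\cls$ hyperinvariant for the scalar shift $M_z$ on $H^2(\D)$, and Beurling then forces $\cls=uH^2(\D)$. So within the paper's framework one would simply invoke Lemma~\ref{Lemma1} rather than re-prove it via rank-one constants; your rank-one argument is the standard way that lemma is established. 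One minor streamlining: there is no need to pass to $\clm^\perp$ first---since $\clm$ itself is invariant under all constants $I\otimes A$, your structural lemma applies directly to $\clm$, giving $\clm=M'\otimes\cle$ with $M'$ an $M_z$-invariant subspace of $H^2(\D)$, and Beurling finishes immediately.
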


\begin{thm}
Let $\clm$ be a closed subspace of $H^2_\cle(\D) \oplus \left(\bigoplus\limits_{k=1}^\infty \mathbb{E}_k\right)$. Then $\clm$ is hyperinvariant for pure power partial isometry $M_z^\cle \oplus \left(\bigoplus\limits_{k=1}^\infty J_k\right)$ if and only if either
$\clm= \{0 \} \oplus \bigoplus\limits_{k=1}^\infty \cln (J_k^{n_k})$ or  $\clm=u\he \oplus \bigoplus\limits_{k=1}^\infty \cln (J_k^{n_k})$ for some $n_k  \in \{0,1,2,\ldots,k\}$ such that $n_k \leq n_{k+1} \leq n_k +1$ and a scalar-valued inner function $u$ such that $u \in z^{k-n_k}H^2(\D)$ for all $k \geq 1$. 
\end{thm}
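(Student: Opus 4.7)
The plan is to apply Wu's decomposition, Lemma \ref{thm-hypinv-directsum}, to the direct sum $T = M_z^\cle \oplus \bigl(\bigoplus_{k=1}^\infty J_k\bigr)$. This reduces the problem to classifying hyperinvariant subspaces of each summand separately, together with imposing the compatibility conditions $S_1 \clm_1 \subseteq \clm_2$ and $S_2 \clm_2 \subseteq \clm_1$ for every intertwiner between the two blocks. The first half is immediate: by Lemma \ref{hyper-shift}, $\clm_1 \subseteq H^2_\cle(\D)$ is either $\{0\}$ or $u H^2_\cle(\D)$ for a scalar-valued inner function $u$; by Lemma \ref{lemma-hyperinv-infdirect}, $\clm_2 = \bigoplus_{k=1}^\infty \cln(J_k^{n_k})$ for a sequence with $n_k \leq n_{k+1} \leq n_k + 1$. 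So the real work is extracting the compatibility relation between $u$ and $(n_k)$.

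The intertwiners $S_2: \bigoplus_k \mathbb{E}_k \to H^2_\cle(\D)$ with $S_2 \bigl(\bigoplus_k J_k\bigr) = M_z^\cle S_2$ turn out to be automatically zero. Indeed, componentwise $S_{2,k} J_k = M_z^\cle S_{2,k}$; iterating and using $J_k^k = 0$ gives $(M_z^\cle)^k S_{2,k} = 0$, and since $(M_z^\cle)^k$ is injective as an isometry, each $S_{2,k}$ vanishes. Consequently this direction of Wu's compatibility imposes no constraint, and the case $\clm_1 = \{0\}$ needs nothing further; we immediately recover the first family $\clm = \{0\} \oplus \bigoplus_k \cln(J_k^{n_k})$.

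The heart of the argument is the other direction. For $S_1: H^2_\cle(\D) \to \bigoplus_k \mathbb{E}_k$ with $S_1 M_z^\cle = \bigl(\bigoplus_k J_k\bigr) S_1$, I would apply the Sz.-Nagy--Foia\c{s} commutant lifting theorem componentwise, using that $M_z$ on $H^2_{\cle_k}(\D)$ is the minimal isometric dilation of $J_k$ while $M_z^\cle$ dilates itself. This yields $\Phi \in H^\infty_{\clb(\cle, \cle_k)}$ such that $S_{1,k} = P_{\mathbb{E}_k} T_\Phi$, and conversely every such $P_{\mathbb{E}_k} T_\Phi$ is an intertwiner (checked directly, since $P_{\mathbb{E}_k} M_z$ annihilates $z^k H^2_{\cle_k}(\D)$). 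For $\clm_1 = u H^2_\cle(\D)$, the requirement $S_{1,k}(u H^2_\cle(\D)) \subseteq \cln(J_k^{n_k}) = z^{k-n_k} H^2_{\cle_k}(\D) \cap \mathbb{E}_k$ must then hold for every $\Phi$. Writing $u = \sum_{m \geq 0} u_m z^m$ and feeding in a constant $f \equiv e$ together with $\Phi(z) \equiv A$ produces $P_{\mathbb{E}_k}(u \cdot Ae) = \sum_{m=0}^{k-1} u_m (Ae) z^m$; demanding that all terms of degree strictly less than $k - n_k$ vanish while $Ae$ ranges freely over $\cle_k$ forces $u_m = 0$ for $m < k - n_k$, i.e., $u \in z^{k-n_k} H^2(\D)$. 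The converse is then short: if $u$ satisfies these divisibility conditions, then $u H^2_{\cle_k}(\D) \subseteq z^{k-n_k} H^2_{\cle_k}(\D)$ and $P_{\mathbb{E}_k}$ preserves this inclusion modulo $z^k H^2_{\cle_k}(\D)$, so $S_{1,k}(u H^2_\cle(\D)) \subseteq \cln(J_k^{n_k})$ for every admissible intertwiner, and Wu's criterion closes the loop. I expect the main obstacle to be exactly this passage from the abstract commutant-lifting representation of $S_{1,k}$ to the concrete coefficient-level divisibility condition, which requires carefully chosen test functions to probe one Taylor coefficient of $u$ at a time; everything else is either a direct invocation of the prior lemmas or an elementary nilpotency argument.
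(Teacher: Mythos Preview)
Your proposal is correct and follows essentially the same route as the paper: Wu's splitting (Lemma \ref{thm-hypinv-directsum}) reduces to the two known classifications, the intertwiners $S_2$ vanish, and commutant lifting represents each $S_1$ as a compressed analytic Toeplitz operator, after which a constant test symbol extracts the coefficient condition $u \in z^{k-n_k} H^2(\D)$. The only cosmetic differences are that the paper lifts $S_1$ in one stroke against the dilation $M_z^{\cle'}$ on $H^2_{\cle'}(\D)$ with $\cle' = \bigoplus_k \cle_k$ rather than componentwise, and kills $S_2$ via $R^n \to 0$ in the strong operator topology rather than componentwise nilpotency; neither changes the substance of the argument.
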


\begin{proof}
Let $\cle'=\bigoplus\limits_{k=1}^\infty \cle_k$,  $\mathbb{E}=\bigoplus\limits_{k=1}^\infty \mathbb{E}_k$ and $\mathbb{E}_k=[z^kH^2_{\cle_k}(\D)]^\perp$ (see Section 4). Also $J_k$ is defined as in (\ref{def}). Set 
\begin{align*}
R & =\bigoplus\limits_{k=1}^\infty J_k
\mbox{~~~and~~} \\
V & =M_z^\cle \oplus R \mbox{~~on~~} \he \oplus \mathbb{E}.
\end{align*}
Suppose $\clm$ is hyperinvariant for $V$.  Then, from Lemma \ref{thm-hypinv-directsum}, $\clm=\clm_1\oplus \clm_2$ such that $\clm_1 \subseteq \he$ and $\clm_2 \subseteq \mathbb{E}$ are hyperinvariant for $M_z^\cle$ and $R$,  respectively and for any operators $S_1 \in \mathcal{B}(\he,\mathbb{E})$ and $S_2 \in \mathcal{B}\left(\mathbb{E},\he\right)$ satisfying $ RS_1=S_1M_z^\cle$ and $M_z^\cle S_2=S_2R$, we have $S_1\clm_1 \subseteq \clm_2$ and $S_2\clm_2 \subseteq \clm_1$. By  Lemma \ref{lemma-hyperinv-infdirect} and Remark \ref{hyper-shift}, $\clm$ has two possibilities:
\[
\clm= \{0\} \oplus \left(\bigoplus\limits_{k=1}^\infty \cln (J_k^{n_k })\right) \textrm{ or } \clm=u\he \oplus \left( \bigoplus\limits_{k=1}^\infty \cln (J_k^{n_k })\right),
\]
where $u\in H^\infty$ is inner and $n_k  \in \{0,1,\ldots,k\}$ such that $n_k  \leq n_{k+1}\leq n_k +1$.  

First let $S_1 \in \mathcal{B}(\he,\mathbb{E})$ such that $RS_1=S_1M_z^\cle$. Since the minimal isometric dilation of $R$ is $M_z^{\cle'}$ on $H^2_{\cle'}(\D)$, it follows by commutant lifting theorem that there exists a contraction $Y : \he \rightarrow H^2_{\cle'}(\D)$ such that
\begin{enumerate}
\item $M_z^{\cle'} Y=YM_z^{\cle}$ \label{(1)},
\item $S_1 =P_{\mathbb{E}}Y,$ and \label{(2)}
\item $\Vert S_1\Vert=\Vert Y \Vert$,
\end{enumerate}
where $P_\mathbb{E}$ is the orthogonal projection of $H^2_{\cle'}(\D)$ onto $\mathbb{E}$.

Now (\ref{(1)}) implies $Y=T_\Phi$ for some $\Phi \in H^\infty_{\mathcal {B}(\cle, \cle')}$ and hence (\ref{(2)}) infers $S_1=P_\mathbb{E}T_\Phi$. Conversely, let $S_1=P_{\mathbb{E}}T_\Phi$ for some $\Phi \in H^\infty_{\mathcal {B}(\cle, \cle')}$. Then
\[
RS_1=RP_{\mathbb{E}}T_\Phi=P_{\mathbb{E}}M_z^{\cle'} T_\Phi=P_{\mathbb{E}}T_\Phi M_z^\cle=S_1M_z^\cle.
\]
Hence, $RS_1=S_1M_z^\cle$ if and only if $S_1= P_\mathbb{E} T_\Phi$, where $\Phi \in H^\infty_{\clb(\cle,\cle')}$. 

For $\clm=\{0\} \oplus \left(\bigoplus\limits_{k=1}^\infty \cln(J_k^{n_k})\right), S_1(\{0\}) \subseteq \bigoplus\limits_{k=1}^\infty \cln(J_k^{n_k})$ is trivially true.

For $\clm=uH^2(\D) \oplus \left(\bigoplus\limits_{k=1}^\infty \cln(J_k^{n_k})\right)$ being  hyperinvariant, for each $ \Phi \in H^\infty_{\mathcal{B}(\cle, \cle')},$
\begin{align*}
P_{\mathbb{E}}T_\Phi(u\he)& \subseteq \bigoplus\limits_{k=1}^\infty \cln (J_k^{n_k }) \\
\Leftrightarrow \bigoplus\limits_{k=1}^\infty P_{\mathbb{E}_k }T_\Phi(u \he) &\subseteq \bigoplus\limits_{k=1}^\infty \cln (J_k^{n_k })\\
\Leftrightarrow P_{\mathbb{E}_k }T_\Phi(u \he) &\subseteq \cln (J_k^{n_k }), \textrm{ for all } k \geq 1.
\end{align*}
Now let $u=\sum\limits_{m=0}^\infty u_mz^m$ and, for fixed $k\geq 1$, choose $\Phi \in \mathcal{B}(\E,\E_k)$ and $a \in \E$ such that $\Phi(a) \neq 0$. Then 
\[
P_{\mathbb{E}_k}T_\Phi(u a)=P_{\mathbb{E}_k}\left(\sum\limits_{m=0}^\infty u_m \Phi(a)z^m\right)=\sum\limits_{m=0}^{k-1}u_m\Phi(a)z^m.
\]

If $n_k  <k$ for some $k$, then  $P_{\mathbb{E}_k}T_\Phi(u a) \in \cln (J_k^{n_k }) \Leftrightarrow u_m=0\,\,  \forall \,\,m<k-n_k$.  

For $n_k=k$, the condition holds trivially. Hence, $u \in z^{k-n_k}H^2(\D)$ for all $k \geq 1$.

Conversely, suppose $\clm=uH^2(\D) \oplus \left(\bigoplus\limits_{k=1}^\infty \cln(J_k^{n_k})\right)$, where $u \in H^\infty$ is an inner function such that $u \in z^{k-n_k}H^2(\D)$ and $n_k \leq n_{k+1}\leq n_k+1$ for all $k \geq 1$. Recall that  for $k \geq 1$, we write 
\[
\mathbb{E}_k=[z^kH^2_{\E_k}(\D)]^\perp \subseteq H^2_{\E_k}(\D). 
\]
Hence, $P_{\mathbb{E}_k}=P_{\mathbb{E}_k}P_{H^2_{\E_k}(\D)}$ where $P_{H^2_{\cle_k}(\D)}$ is the orthogonal projection of $H^2_{\cle'}(\D)$ onto $H^2_{\E_k}(\D)$.
Then, for any $\Phi \in H^\infty_{\clb(\E,\E')}$, 
\begin{align*}
P_{\mathbb{E}_k} T_\Phi(u\he) & \subseteq P_{\mathbb{E}_k} T_\Phi(z^{k-n_k}\he)\\
&=P_{\mathbb{E}_k} P_{H^2_{\E_k}(\D)}T_\Phi(z^{k-n_k}\he)\\
&= (I-(M_z^{\E_k})^k(M_z^{\E_k})^{*k})P_{H^2_{\E_k}(\D)}T_\Phi (M_z^\E)^{k-n_k} \he\\
 &= (I-(M_z^{\E_k})^k(M_z^{\E_k})^{*k}) (M_z^{\E_k})^{k-n_k}P_{H^2_{\E_k}(\D)} T_\Phi \he\\
&= M_z^{k-n_k} (I-M_z^{n_k}M_z^{*n_k})P_{H^2_{\E_k}(\D)} T_\Phi \he\\
&\subseteq \cln(J_k^{n_k}).
\end{align*}
Hence 
\[
P_\mathbb{E}  T_\Phi(u\he) =\bigoplus\limits_{k=1}^\infty  P_{\mathbb{E}_k} T_\Phi(uH^2_\E(\D)) \subseteq \bigoplus\limits_{k=1}^\infty \cln(J_k^{n_k}).
\]

Now let $S_2 \in \mathcal{B}\left(\mathbb{E},\he\right)$ such that $M_z^\cle S_2=S_2R$. Then, for $n \geq 1$,
\[
(M_z^\cle)^nS_2=S_2R^n. 
\]
Since $R^n \rightarrow 0$ as $n \rightarrow \infty$ in strong operator topology, thus $S_2=0$.

Therefore, only  hyperinvariant subspaces of $V$ are those  of the form
\[
\clm= \{0 \} \oplus \left(\bigoplus\limits_{k=1}^\infty \cln (J_k^{n_k })\right)
\]
or
\[
\clm=u\he \oplus \left(\bigoplus\limits_{k=1}^\infty \cln (J_k^{n_k})\right), 
\]
where $n_k \in \{0,1,\ldots,k\}$ such that $n_k  \leq n_{k+1} \leq n_k +1$ and $u\in H^\infty$ inner such that $u \in z^{k-n_k}H^2(\D)$ for all $k \geq 1$.

This finishes the proof.
\end{proof}

\begin{ex}
Let $V=M_z \oplus \left(\bigoplus\limits_{k=1}^\infty J_k\right)$ on $H^2(\D) \oplus \left(\bigoplus\limits_{k=1}^\infty \C^k\right)$. Consider
\[
\clm= zH^2(\D) \oplus \left(\bigoplus\limits_{k=1}^\infty \cln(J_k)\right).
\]
Define 
$S: H^2(\D)  \rightarrow  \bigoplus\limits_{k=1}^\infty H^2(\D)$ as
\[ 
Sf= 0\oplus 0 \oplus 0 \oplus  z f \oplus 0\oplus \cdots \quad (f \in H^2(\D) ).
\]
Set $\clk=\bigoplus\limits_{k=1}^\infty \C^k$. Now let 
\[
S'= \begin{pmatrix}
0 &0\\
 P_{\clk} S &0
\end{pmatrix}: 
\begin{matrix}
 \begin{array}{ccc}
H^2(\D) & &H^2(\D)\\
\oplus & \rightarrow& \oplus\\
\clk & &\clk
\end{array}
\end{matrix},
\]
where $P_{\clk}$ is the orthogonal projection from $\bigoplus\limits_{k=1}^\infty H^2(\D)$ onto $ \clk$. Then it is easy to see that $S'V=VS'$ but $S' \clm \nsubseteq \clm$. Indeed, for $a \in \C\setminus \{0\}$,
\[
S'(az \oplus 0)=0 \oplus P_\clk (0 \oplus 0 \oplus 0 \oplus  az^2 \oplus \cdots) =0\oplus  (0 \oplus 0 \oplus 0 \oplus az^2 \oplus 0 \oplus \cdots).
\]
But $az^2 \notin \cln(J_4)$. Hence $\clm$ is not hyperinvariant. 

In the same space, if we take $\clm= zH^2(\D) \oplus\left( \bigoplus\limits_{k=1}^\infty \cln(J_k^{k-1})\right)$, then $\clm$ is hyperinvariant for $V$.
\end{ex}

We are now ready to find the explicit structure of hyperinvariant subspaces of a c.n.u.\ power partial isometry. For simplicity, we will do it in two parts. First, we will find the hyperinvariant subspaces of direct sum of forward and backward shift operator and then that of direct sum of backward shift and truncated shift.

\begin{thm}
A closed subspace of $H^2_\cle(\D) \oplus H^2_\clf(\D)$ is hyperinvariant for $M_z^\cle \oplus (M_z^\clf)^*$ if and only if it is either of the form $\{0 \} \oplus [vH^2_{\clf}(\D)]^\perp$, $ \{0\}\oplus H^2_\clf(\D) $ or $uH^2_\cle(\D) \oplus H^2_\clf(\D)$, where $u$ and $v$ are scalar-valued inner functions.
\end{thm}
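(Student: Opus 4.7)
The plan is to invoke Lemma~\ref{thm-hypinv-directsum} to write any hyperinvariant subspace as $\clm = \clm_1 \oplus \clm_2$, with $\clm_1$ hyperinvariant for $M_z^\cle$ and $\clm_2$ hyperinvariant for $(M_z^\clf)^*$, subject to $S_1 \clm_1 \subseteq \clm_2$ for every $S_1 \in \clb(H^2_\cle(\D), H^2_\clf(\D))$ intertwining $M_z^\cle$ and $(M_z^\clf)^*$, and $S_2 \clm_2 \subseteq \clm_1$ for every $S_2 \in \clb(H^2_\clf(\D), H^2_\cle(\D))$ with $M_z^\cle S_2 = S_2 (M_z^\clf)^*$. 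By Lemma~\ref{hyper-shift}, applied directly and to orthogonal complements, $\clm_1$ is either $\{0\}$ or $u H^2_\cle(\D)$ for some scalar-valued inner $u$, while $\clm_2$ is either $H^2_\clf(\D)$ or $[v H^2_\clf(\D)]^\perp$ for some scalar-valued inner $v$.

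The first technical point will be that the only $S_2$ satisfying the above intertwining is the zero operator. Indeed, for a constant $f \in \clf \subseteq H^2_\clf(\D)$, $(M_z^\clf)^* f = 0$ forces $M_z^\cle(S_2 f) = 0$, and injectivity of $M_z^\cle$ gives $S_2 f = 0$; a routine induction on polynomial degree together with density then yields $S_2 \equiv 0$, so the second intertwining condition is automatic. The first intertwining relation $S_1 M_z^\cle = (M_z^\clf)^* S_1$ is precisely the characterization of Hankel operators recalled in the preliminaries, so the nontrivial requirement reduces to $H_\Phi \clm_1 \subseteq \clm_2$ for every $\Phi \in L^\infty_{\clb(\cle, \clf)}$.

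If $\clm_1 = \{0\}$ this is vacuous and $\clm_2$ may be any hyperinvariant subspace of $(M_z^\clf)^*$, producing the families $\{0\} \oplus H^2_\clf(\D)$ and $\{0\} \oplus [v H^2_\clf(\D)]^\perp$. If $\clm_1 = u H^2_\cle(\D)$ for some scalar inner $u$, the crux is to establish
\[
\bigvee_{\Phi \in L^\infty_{\clb(\cle,\clf)}} H_\Phi\bigl(u H^2_\cle(\D)\bigr) = H^2_\clf(\D),
\]
which will pin down $\clm_2 = H^2_\clf(\D)$ and yield the final family $u H^2_\cle(\D) \oplus H^2_\clf(\D)$ (with $u \equiv 1$ recovering the full summand). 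For the density claim, given $c \in \clf$, $b \in \cle \setminus \{0\}$ and $k \geq 0$, I would choose $X \in \clb(\cle, \clf)$ with $Xb = c$ and set $\Phi(e^{it}) := \overline{u(e^{it})}\, e^{-ikt} X$, which lies in $L^\infty_{\clb(\cle, \clf)}$ because $|u(e^{it})| = 1$ a.e.\ on $\T$. A direct computation then gives $\Phi \cdot (ub) = e^{-ikt} c$, so $H_\Phi(ub) = P_+^{\clf} J(e^{-ikt} c) = z^k c$; since $\{z^k c : k \geq 0,\, c \in \clf\}$ spans $H^2_\clf(\D)$, the claim follows.

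The main obstacle is this final density computation—everything else is bookkeeping built on the commutant-lifting-type machinery and the shift classifications already in the paper. For the converse, once the commutant of $M_z^\cle \oplus (M_z^\clf)^*$ is identified with the Hankel operators in the off-diagonal slot $H^2_\cle(\D) \to H^2_\clf(\D)$ (the other off-diagonal slot being zero) together with the standard commutants of $M_z^\cle$ and $(M_z^\clf)^*$, each of the three listed subspaces is directly checked to be stable under every such operator.
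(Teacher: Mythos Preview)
Your proposal is correct and follows essentially the same route as the paper: both apply Lemma~\ref{thm-hypinv-directsum}, classify $\clm_1,\clm_2$ via Lemma~\ref{hyper-shift}, identify the intertwiners $S_1$ with Hankel operators and show $S_2=0$, then eliminate the case $uH^2_\cle(\D)\oplus[vH^2_\clf(\D)]^\perp$. The only cosmetic differences are that the paper obtains $S_2=0$ from purity of $M_z^\clf$ rather than your induction, and rules out the bad case by exhibiting a single $\Phi(e^{it})=v(e^{-it})\Phi'(e^{it})\overline{u(e^{it})}$ violating the inclusion rather than proving your density statement $\bigvee_\Phi H_\Phi(uH^2_\cle(\D))=H^2_\clf(\D)$---your version is arguably cleaner.
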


\begin{proof}
Let $\clm$ be hyperinvariant for $X=M_z^\cle \oplus (M_z^\clf)^*$. Then from Lemma \ref{thm-hypinv-directsum}, there are four possible types, namely, 
\begin{align*}
\clm &=  \{0 \} \oplus [vH^2_{\clf}(\D)]^\perp, \\
\clm &=  uH^2_\cle(\D) \oplus [vH^2_{\clf}(\D)]^\perp, \\
\clm &=  \{0 \} \oplus H^2_\clf(\D), \\ \mbox{~~or~~}
\clm &=   uH^2_\cle(\D) \oplus H^2_\clf(\D),
\end{align*}
where $u,v \in H^\infty$ are inner functions. 

Let $S_1: H^2_{\E}(\D) \rightarrow H^2_{\clf}(\D)$ such that 
$(M_z^\clf)^*S_1=S_1M_z^\cle$. This holds if and only if
\[
S_1=H_{\Phi} \mbox{~~for some~~} \Phi \in L^\infty_{\mathcal {B}(\E,\clf)}.
\] 

For $\clm= \{0 \} \oplus [vH^2_{\clf}(\D)]^\perp$, $\{0 \} \oplus H^2_\clf(\D)$ or $uH^2_\cle(\D) \oplus H^2_\clf(\D)$, the condition given in Lemma \ref{thm-hypinv-directsum} holds trivially.
For 
\[
\clm=uH^2_\E(\D) \oplus [vH^2_{\clf}(\D)]^\perp
\]
being hyperinvariant, 
\[
H_\Phi(uH^2_{\E}(\D)) \subseteq [vH^2_{\clf}(\D)]^\perp \,\, ~~ \forall \,\, \Phi \in L^\infty_{\mathcal {B}(\E,\clf)}.
\]
In particular, let $\Phi(e^{it})=v(e^{-it})\Phi'(e^{it})\overline{u(e^{it})}$, where $\Phi' \in L^\infty_{\mathcal{B}(\E,\clf)}$ such that all the non-negative Fourier coefficients of $\Phi'$ are zero.
Then 
\[ 
H_\Phi(uH^2_{\E}(\D)) \subseteq [vH^2_{\clf}(\D)]^\perp
\]
then, for any $a \in \mathcal{E}$, we have
\begin{align*}
&\langle P_+^{\clf}J(\Phi ua),vg \rangle =0 ~~\,\, \forall \,\,g \in H^2_\clf(\D)\\
\Leftrightarrow & \langle J(\Phi ua),vg \rangle =0  \,\, \forall\,\, g \in H^2_\clf(\D)\\
\Leftrightarrow&\frac{1}{2\pi}\int_{0}^{2\pi}\left\langle v(e^{it})\Phi'(e^{-it})a, v(e^{it})g(e^{it}) \right\rangle_\clf=0 \,\, \forall \,\, g \in H^2_\clf(\D)  \\
\Leftrightarrow &  \frac{1}{2\pi}\int_{0}^{2\pi}\left\langle \Phi'(e^{-it})a, g(e^{it}) \right\rangle=0 \,\, \forall \,\, g \in H^2_\clf(\D)  \\
\Leftrightarrow &  \langle J(\Phi'a),g \rangle=0\,\, \forall \,\,  g \in H^2_\clf(\D) \\
\Leftrightarrow & J (\Phi'a) \in [H^2_{\clf}(\D)]^\perp
\end{align*}
which is not possible unless $\Phi = 0$.

On the other hand, let $S_2: H^2_{\clf}(\D) \rightarrow H^2_{\E}(\D)$ such that $M_z^\cle S_2=S_2(M_z^{\clf})^*$. Hence, for each $n \geq 0$, 
\[
(M_z^\cle)^n S_2=S_2(M_z^{\clf})^{*n}
\]
which implies $S_2=(M_z^\cle)^{*n}S_1(M_z^\clf)^{*n}$. Since $M_z^{\clf}$ is pure, we have $S_2 = 0$. 

Hence, $\clm$ is hyperinvariant subspace for $X$ if and only if $\clm$ is one of the following forms:
\begin{enumerate}
\item $\{0\} \oplus [vH^2_{\clf}(\D)]^\perp$, 
\item $\{0\} \oplus H^2_\clf(\D)$, or
\item $uH^2_\cle(\D) \oplus H^2_\clf(\D)$,
\end{enumerate}
where $u$ and $v$ are scalar-valued inner functions.
\end{proof}

\begin{thm}
The only hyperinvariant subspaces of $(M_z^\clf)^* \oplus  J_k$ on $H^2_\clf(\D) \oplus  \mathbb{E}_k$   are $ [vH^2_{\clf}(\D)]^\perp \oplus \cln (J_k^{n_k})$ and $ H^2_{\clf}(\D) \oplus  \cln (J_k^{n_k})$, where $n_k \in \{0,1,\ldots,k\}$ and $v$ is any scalar-valued inner function such that $v \in z^{n_k}H^2(\D)$.
\end{thm}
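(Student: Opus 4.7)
The plan is to reduce to an intertwiner analysis via the direct-sum lemma (Lemma 5.2) and then classify the two families of intertwiners explicitly.

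First, by Lemma 5.2 applied to $T = (M_z^\clf)^* \oplus J_k$, any hyperinvariant subspace $\clm$ decomposes as $\clm = \clm_1 \oplus \clm_2$, where $\clm_1 \subseteq H^2_\clf(\D)$ is hyperinvariant for $(M_z^\clf)^*$ and $\clm_2 \subseteq \mathbb{E}_k$ is hyperinvariant for $J_k$. By Lemma 5.6 (applied after passing to adjoints), $\clm_1$ must be either $H^2_\clf(\D)$ or $[vH^2_\clf(\D)]^\perp$ for some scalar inner $v$ (with $v\equiv 1$ giving $\{0\}$). By Lemma 5.4, $\clm_2 = \cln(J_k^{n_k})$ for some $n_k \in \{0,1,\ldots,k\}$. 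Besides this, Lemma 5.2 demands $S_{21}\clm_1 \subseteq \clm_2$ and $S_{12}\clm_2 \subseteq \clm_1$ for every bounded $S_{21}: H^2_\clf(\D) \to \mathbb{E}_k$ and $S_{12}: \mathbb{E}_k \to H^2_\clf(\D)$ satisfying the respective intertwining relations with $T$.

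The cross-intertwiner $S_{21}$ satisfies $J_k S_{21} = S_{21}(M_z^\clf)^*$, hence $0 = J_k^k S_{21} = S_{21}(M_z^\clf)^{*k}$. Since $(M_z^\clf)^*$ is surjective (as $M_z^\clf$ is an isometry), $(M_z^\clf)^{*k}$ is also surjective, forcing $S_{21}=0$. Thus the condition $S_{21}\clm_1 \subseteq \clm_2$ is automatic and imposes no restriction.

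The essential step is the analysis of $S_{12}$. Writing $(M_z^\clf)^* S_{12} = S_{12} J_k$ and iterating, I get $S_{12} J_k^m = (M_z^\clf)^{*m} S_{12}$. Since $J_k^k = 0$, the range of $S_{12}$ sits inside $\cln((M_z^\clf)^{*k}) = [z^k H^2_\clf(\D)]^\perp$. Unpacking this more concretely by tracking a cyclic chain $e_0, e_1 = J_k e_0, \ldots, e_{k-1} = J_k^{k-1} e_0$ for $J_k$ inside $\mathbb{E}_k$, the image $y_0 := S_{12}(e_0)$ is an arbitrary polynomial of degree $<k$ (valued in $\clf$), and $S_{12}(e_i) = (M_z^\clf)^{*i} y_0$. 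Since $\cln(J_k^{n_k})$ is spanned by $e_{k-n_k}, \ldots, e_{k-1}$ (with coefficients in $\cle_k$), one obtains $S_{12}\cln(J_k^{n_k}) \subseteq [z^{n_k}H^2_\clf(\D)]^\perp$, and by varying $S_{12}$ (equivalently $y_0$) and using the surjectivity of $(M_z^\clf)^{*(k-n_k)}: [z^kH^2_\clf(\D)]^\perp \to [z^{n_k}H^2_\clf(\D)]^\perp$, the union $\bigcup_{S_{12}} S_{12}\cln(J_k^{n_k})$ exhausts $[z^{n_k}H^2_\clf(\D)]^\perp$. Therefore the requirement $S_{12}\clm_2 \subseteq \clm_1$ for every admissible $S_{12}$ is equivalent to $[z^{n_k}H^2_\clf(\D)]^\perp \subseteq \clm_1$, which, when $\clm_1 = [vH^2_\clf(\D)]^\perp$, translates to $vH^2_\clf(\D) \subseteq z^{n_k}H^2_\clf(\D)$, i.e.\ $v \in z^{n_k}H^2(\D)$. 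This matches the two stated families.

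For the converse, one reverses the argument: given $\clm = [vH^2_\clf(\D)]^\perp \oplus \cln(J_k^{n_k})$ with $v \in z^{n_k}H^2(\D)$ (or $\clm = H^2_\clf(\D) \oplus \cln(J_k^{n_k})$), any operator commuting with $T$ has block form with $S_{21}=0$, diagonal blocks preserving $\clm_1$ and $\clm_2$ by hyperinvariance of these for $(M_z^\clf)^*$ and $J_k$, and the off-diagonal block $S_{12}$ satisfying $S_{12}\cln(J_k^{n_k}) \subseteq [z^{n_k}H^2_\clf(\D)]^\perp \subseteq [vH^2_\clf(\D)]^\perp$ by the inclusion $v \in z^{n_k}H^2(\D)$. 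The main technical obstacle is the explicit description of the range of $S_{12}$ on $\cln(J_k^{n_k})$ and establishing that one can realize every element of $[z^{n_k}H^2_\clf(\D)]^\perp$ this way; everything else is bookkeeping.
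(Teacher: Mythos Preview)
Your argument is correct and follows the same structural skeleton as the paper: apply Wu's direct-sum lemma (Lemma~5.2), identify the possible summands $\clm_1,\clm_2$ via Lemmas~5.4 and~5.6, show the intertwiner $S_{21}:H^2_\clf(\D)\to\mathbb{E}_k$ vanishes because $J_k^k=0$, and then extract the constraint $v\in z^{n_k}H^2(\D)$ from the intertwiner $S_{12}:\mathbb{E}_k\to H^2_\clf(\D)$. The one genuine difference is in how $S_{12}$ is handled. The paper lifts $S_{12}$ through the dilation $J_k\leadsto M_z^{\cle_k}$ to obtain a Hankel operator $H_\Phi$ with $z^kH^2_{\cle_k}(\D)\subseteq\cln(H_\Phi)$, then tests with the explicit symbol $\Phi_k(e^{it})=\phi_k e^{-i(k-1)t}$ and a Toeplitz computation to force the low Fourier coefficients of $v$ to vanish. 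Your route is more elementary: you track the $J_k$-orbit of a single vector, observe that $S_{12}$ is determined by $y_0=S_{12}(e_0)\in[z^kH^2_\clf(\D)]^\perp$, and read off both the containment $S_{12}\cln(J_k^{n_k})\subseteq[z^{n_k}H^2_\clf(\D)]^\perp$ and its sharpness by varying $y_0$. Your approach avoids the Hankel/Toeplitz machinery entirely and is arguably cleaner for this finite-index situation; the paper's approach has the advantage of fitting the Hankel-operator framework used elsewhere (e.g.\ Theorem~5.8) and making the converse verification immediate. One small point worth tightening in your write-up: the ``cyclic chain'' language is really a statement about the $\cle_k$-module structure of $\mathbb{E}_k=\cle_k\otimes\C^k$, so when you construct $S_{12}$ realizing a prescribed $y_0$ you should say explicitly that you fix a nonzero $a\in\cle_k$, define $S_{12}$ on $\operatorname{span}\{az^i:0\le i\le k-1\}$ by $az^i\mapsto(M_z^\clf)^{*i}y_0$, and extend by zero on the $J_k$-reducing complement $(\cle_k\ominus\C a)\otimes\C^k$.
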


\begin{proof}
Set
\[
W=(M_z ^{\clf})^*\oplus J_k  \mbox{~~on~~} H^2_{\clf}(\D) \oplus \mathbb{E}_k.
\]
Again $\mathbb{E}_k$ is identified as a subspace of $H^2_{\cle_k}(\D)$, namely, $[z^kH^2_{\cle_k}(\D)]^\perp$. 
Suppose $\clm$ is hyperinvariant for $W$. We shall again make use of  Lemma \ref{thm-hypinv-directsum} in what follows. Then $\clm$ is either of type 
\[
\clm = H^2_{\clf}(\D) \oplus  \cln (J_k^{n_k })\mbox{~~or~~} \clm=[v H^2_{\clf}(\D)]^\perp \oplus \cln (J_k^{n_k }),
\]
where $v\in H^\infty$ is inner and $n_k  \in \{0,1,2,\ldots,k\}$.

Let $S_1:  H^2_{\clf}(\D) \rightarrow \mathbb{E}_k$ such that $J_kS_1=S_1(M_z^\clf)^*$. Then
\[
S_1(M_z^\clf)^{*k}=J_k^kS_1=0.
\]
Thus $S_1 = 0$. 

Now let $S_2: \mathbb{E}_k \rightarrow  H^2_{\clf}(\D)$ such that 
\begin{equation}\label{intertwin}
(M_z^\clf)^*S_2=S_2J_k.
\end{equation}
Since $M_z^{\cle_k}$ on $H^2_{\cle_k}(\D)$ is the minimal isometric dilation of $J_k$, 
\[
J_kP_{\mathbb{E}_k}=P_{\mathbb{E}_k}M_z^{\cle_k},
\]
where $P_{\mathbb{E}_k}$  is  the orthogonal projection from $H^2_{\cle_k}(\D)$ onto $\mathbb{E}_k$.  Then 
\[
(M_z^\clf)^*S_2P_{\mathbb{E}_k}=S_2J_kP_{\mathbb{E}_k}=S_2P_{\mathbb{E}_k}M_z^{\cle_k}.
\]
Set $Y=S_2P_{\mathbb{E}_k}$. Then $(M_z^\clf)^*Y=YM_z^{\cle_k}$ implies 
\[
Y=H_\Phi \mbox{~~for some~~} \Phi \in L^\infty_{\mathcal{B}(\cle_k,\clf)},
\]
and hence
\[
S_2=Y|_{\mathbb{E}_k}=H_\Phi |_{\mathbb{E}_k}.
\]

Now let  $\Phi \in L^\infty_{\clb(\cle_k,\clf)}$  such that $(M_z^\clf)^*H_\Phi|_{\mathbb{E}_k}=H_\Phi J_k$. Then for $a \in \mathcal{E}_k$ and $m \geq 0$, we have
\[
	H_\Phi(ae^{i(k+m)t})=(M_z^\clf)^{*m+1}H_\Phi(ae^{i(k-1)t})=H_\Phi J_k^{m+1}(ae^{i(k-1)t})=0.	
\]

Thus $z^kH^2_{\cle_k}(\D) \subseteq \cln(H_\Phi)$. The converse is also true. Hence (\ref{intertwin}) holds if and only if $S_2=H_\Phi|_{\mathbb{E}_k}$, where $\Phi \in L^\infty_{\clb(\cle_k,\clf)}$  such that   $z^kH^2_{\cle_k}(\D) \subseteq \cln(H_\Phi)$.

Now for $\clm=   H^2_{\clf}(\D) \oplus  \cln (J_k^{n_k })$, $S_2( \cln (J_k^{n_k })) \subseteq  H^2_{\clf}(\D)$ is obvious. 

For $\clm=[v H^2_{\clf}(\D)]^\perp \oplus  \cln (J_k^{n_k })$ being hyperinvariant for $W$,  we  have 
\[
H_\Phi(\cln (J_k^{n_k })) \subseteq [v H^2_{\clf}(\D)]^\perp, 
\]
for all $\Phi \in L^\infty_{\clb(\cle_k,\clf)}$  such that   $z^kH^2_{\cle_k}(\D) \subseteq \cln(H_\Phi)$. 

For $n_k=0$, it is trivially true. Let $n_k \geq 1$.
Note that for any $\phi_k \in \clb(\cle_k,\clf)$, we can define $\Phi_k \in L^\infty_{\clb(\cle_k,\clf)}$ by
\begin{equation}\label{Phi}
		\Phi_k(e^{it})= \phi_ke^{-i(k-1)t}\quad\quad (\text{a.e.\  on } \T).
\end{equation}
Then, for $a \in \cle_k$, 
\[
H_{\Phi_k}(ae^{ikt})=P_+^\clf J(\Phi_k ae^{ikt})=P_+^\clf (\phi_k(a)e^{-it})=0.
\]
And hence $z^kH^2_{\cle_k}(\D) \subseteq \cln(H_{\Phi_k})$.  It follows that
\[
		H_{\Phi_k}(\cln(J_k^{n_k})) \subseteq [vH^2_\clf(\D)]^\perp.
\]
Choose $\phi_k\in \clb(\cle_k,\clf)$ and $a \in \cle_k$ such that $\phi_k(a)\neq 0$. Note that $az^{k-n_k} \in \cln(J_k^{n_k})$. Suppose $v =\sum\limits_{m=0}^\infty v_mz^m$ $(v_m \in \C)$. Then, for $\Phi_k$ as defined in (\ref{Phi}), 
\begin{align*}
			& \langle H_{\Phi_k}(ae^{i(k-n_k)t}), vg\rangle =0 \,\,\forall \,\,g \in H^2_\clf(\D)\\
	\implies	& \langle P_+^\clf J(\Phi_k ae^{i(k-n_k)t}), vg \rangle  =0  \,\, \forall\,\, g \in H^2_\clf(\D)\\
	\implies	& \langle J(\Phi_k ae^{i(k-n_k)t}), vg \rangle=0 \,\, \forall \,\, g \in H^2_\clf(\D)\\
	\implies 	&\frac{1}{2\pi}\int_{0}^{2\pi}\left\langle \phi_k(a)e^{i(n_k-1)t}, v(e^{it})g(e^{it}) \right\rangle_\clf=0 \,\, \forall \,\, g \in H^2_\clf(\D)  \\
	\implies 	&\frac{1}{2\pi}\int_{0}^{2\pi}\left\langle \overline{v(e^{it})}\phi_k(a)e^{i(n_k-1)t},g(e^{it}) \right\rangle_\clf=0 \,\, \forall \,\, g \in H^2_\clf(\D)  \\
	\implies 	& \langle T_\Psi a, g\rangle =0\,\, \forall \,\, g \in H^2_\clf(\D),
\end{align*}
where $\Psi=\bar{v}\phi_kz^{n_k-1}=\sum\limits_{m=0}^\infty \overline{v_m}\phi_kz^{n_k-m-1}$. Hence
\[
			T_\Psi a=0.
\]

Thus $\overline{v_m}\phi_k(a)=0$ for all $m \leq n_k-1$. But since $\phi_k(a)\neq 0$, we get $v_m=0$ for all $m \leq n_k-1$, i.e., $v \in z^{n_k}H^2(\D)$. 

Conversely, for  $v \in z^{n_k}H^2(\D)$ and $\Phi \in L^\infty_{\clb(\cle_k,\clf)}$ such that   $(M_z^\clf)^*H_\Phi|_{\mathbb{E}_k}=H_\Phi J_k$, it is trivial to check that $H_\Phi(\cln(J_k^{n_k}))\subseteq [vH^2(\D)]^\perp$. Hence $\clm$ is hyperinvariant for $W$.
Therefore, $\clm$ is hperinvariant for $W$ if and only if it is either of the form
\[
\clm= H^2_{\clf}(\D) \oplus \cln (J_k^{n_k}) \mbox{~~~~or~~~~} \clm=[vH^2_\clf(\D)]^\perp \oplus \cln(J_k^{n_k}),
\]
where $v$ is a scalar-valued inner function such that $v \in z^{n_k}H^2(\D)$ and $n_k \in \{0,1,\ldots k\}$.
\end{proof}

From the above deliberation, we obtain the complete characterization of hyperinvariant subspaces of a c.n.u.\ power partial isometry which is stated as follows.

\begin{thm}
A closed subspace $\clm$ of $\he \oplus H^2_{\clf}(\D) \oplus \bigoplus\limits_{k=1}^\infty (\cle_k \otimes \C^k)$ is hyperinvariant for  a  c.n.u.\ power partial isometry $ T=M_z \oplus M_z^* \oplus \left(\bigoplus\limits_{k=1}^\infty J_k\right)$ if and only if $\clm$ is one of the following forms:
\[
\clm =\begin{cases}
\{ 0 \} \oplus H^2_{\clf}(\D) \oplus \left(  \bigoplus\limits_{k=1}^\infty \cln (J_k^{n_k })\right)\\
\{0 \}  \oplus  [vH^2_{\clf}(\D)]^\perp \oplus \left(\bigoplus\limits_{k=1}^\infty \cln (J_k^{n_k })\right)\\
    u\he  \oplus  H^2_{\clf}(\D)  \oplus \left(  \bigoplus\limits_{k=1}^\infty \cln (J_k^{n_k })\right)
\end{cases},
\]
where $n_k  \in \{0,1,\dots,k\}$ with $n_k  \leq n_{k+1} \leq n_k +1$ and $u,v \in H^\infty$ are inner functions such that $u \in z^{k-n_k}H^2(\D)$ and $v \in z^{n_k}H^2(\D)$ for all $k \geq 1$.
\end{thm}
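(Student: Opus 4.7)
The plan is to view the result as the culmination of the three pairwise cases just proved, and to apply Wu's decomposition lemma (Lemma \ref{thm-hypinv-directsum}) iteratively. Write $T = V_1 \oplus V_2 \oplus R$ where $V_1 = M_z^\cle$, $V_2 = (M_z^\clf)^*$, and $R = \bigoplus_{k=1}^\infty J_k$ on $\mathbb{E} = \bigoplus_{k=1}^\infty \mathbb{E}_k$. Grouping the last two summands, Lemma \ref{thm-hypinv-directsum} first splits any hyperinvariant subspace $\clm$ of $T$ as $\clm = \clm_1 \oplus \clm'$ with $\clm_1$ hyperinvariant for $V_1$ and $\clm'$ hyperinvariant for $V_2 \oplus R$. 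A second application to $V_2 \oplus R$ then yields $\clm' = \clm_2 \oplus \clm_3$ with $\clm_2$ hyperinvariant for $V_2$ and $\clm_3$ hyperinvariant for $R$. By Lemma \ref{hyper-shift} (and its dual), $\clm_1 \in \{\{0\},\, u\he\}$ for some scalar inner $u$, and $\clm_2 \in \{H^2_\clf(\D),\, [vH^2_\clf(\D)]^\perp\}$ for some scalar inner $v$; by Lemma \ref{lemma-hyperinv-infdirect}, $\clm_3 = \bigoplus_{k=1}^\infty \cln(J_k^{n_k})$ with $n_k \in \{0,1,\dots,k\}$ and $n_k \le n_{k+1} \le n_k + 1$.

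Next I would impose the three cross-intertwining conditions that Lemma \ref{thm-hypinv-directsum} demands for each pair. Between $V_1$ and $V_2$ the intertwiners are precisely the Hankel operators $H_\Phi$, $\Phi \in L^\infty_{\clb(\cle,\clf)}$; reusing the computation carried out for $M_z^\cle \oplus (M_z^\clf)^*$ earlier in this section shows that the pair $(u\he,\, [vH^2_\clf(\D)]^\perp)$ is incompatible, leaving exactly the three listed combinations for $(\clm_1,\clm_2)$. Between $V_1$ and $R$, the commutant lifting theorem identifies the intertwiners $S_1$ satisfying $RS_1 = S_1 V_1$ with the compressions $P_\mathbb{E} T_\Phi$ for $\Phi \in H^\infty_{\clb(\cle,\cle')}$ (where $\cle' = \bigoplus_k \cle_k$); the condition $S_1\clm_1 \subseteq \clm_3$ reproduces verbatim the calculation from the pure-case theorem, giving $u \in z^{k-n_k}H^2(\D)$ for every $k \ge 1$. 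Between $V_2$ and $R$, the intertwiners are exactly the restrictions $H_\Phi|_{\mathbb{E}_k}$ with $z^kH^2_{\cle_k}(\D) \subseteq \cln(H_\Phi)$, as in the previous $(M_z^\clf)^* \oplus J_k$ case, and imposing $S_2 \clm_3 \subseteq \clm_2$ componentwise (using the test functions $\Phi_k(e^{it}) = \phi_k e^{-i(k-1)t}$) forces $v \in z^{n_k}H^2(\D)$ for every $k$. The remaining intertwining directions terminate automatically: any $S: H^2_\clf(\D) \to \he$ with $M_z S = S(M_z^\clf)^*$ satisfies $S = (M_z)^{*n}S(M_z^\clf)^{*n}$, which vanishes by purity of $(M_z^\clf)^*$, and similarly any $S: H^2_\clf(\D) \to \mathbb{E}_k$ intertwining the two is killed by $J_k^k = 0$.

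For the converse, given any subspace of one of the three listed shapes, the intertwining inclusions are verified by the explicit formulas for $H_\Phi$ and $P_\mathbb{E} T_\Phi$ together with the membership constraints $u \in z^{k-n_k}H^2(\D)$ and $v \in z^{n_k}H^2(\D)$, exactly as in the componentwise calculations already performed in this section; Wu's lemma then packages everything back into hyperinvariance for $T$. The main technical obstacle will be ensuring that the two membership conditions, derived index-by-index in the $(V_1,R)$ and $(V_2,R)$ analyses, can be asserted simultaneously for all $k \ge 1$ without further compatibility constraints, and that the negative conclusion in the $(V_1,V_2)$ analysis (ruling out $u\he \oplus [vH^2_\clf(\D)]^\perp$) survives in the presence of the third summand; both points reduce to noting that the relevant intertwiners decouple blockwise, so the three pairs of constraints are genuinely independent.
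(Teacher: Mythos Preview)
Your proposal is correct and follows essentially the same route as the paper: the authors present this theorem as an immediate consequence (``From the above deliberation'') of the three pairwise analyses already carried out, and your iterated use of Wu's Lemma \ref{thm-hypinv-directsum} makes that combination explicit. The only omission is that among the ``remaining intertwining directions'' you should also record that any $S:\mathbb{E}\to\he$ with $M_z^\cle S = S R$ vanishes (by purity of $R$, exactly as in the pure-case theorem), but this is already implicit in your reference to that theorem.
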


\vspace{0.2 cm}

\label{Ref}

\end{document}